\documentclass[a4paper]{amsart}
\usepackage{amsmath,amsthm,amssymb,latexsym,epic,bbm,comment,mathrsfs}
\usepackage{graphicx,enumerate,stmaryrd,color}
\usepackage[all,2cell]{xy}
\xyoption{2cell}

\newtheorem{theorem}{Theorem}
\newtheorem{lemma}[theorem]{Lemma}
\newtheorem{corollary}[theorem]{Corollary}
\newtheorem{proposition}[theorem]{Proposition}

\newtheorem{example}[theorem]{Example}

\newtheorem{remark}[theorem]{Remark}
\usepackage[all]{xy}
\usepackage[active]{srcltx}
\usepackage[parfill]{parskip}
\usepackage{enumerate}

\newcommand{\Gtil}{\widetilde{G}_{\sigma_1}}

\begin{document}
\title[Simple $\mathfrak{sl}_2$-modules that are
torsion free $U(\mathfrak{h})$-modules of rank $1$]
{Simple $\mathfrak{sl}_2$-modules that are\\  
torsion free $U(\mathfrak{h})$-modules of rank $1$}


\author[D.~Grantcharov, L.~K{\v r}i{\v z}ka and V.~Mazorchuk]{
Dimitar Grantcharov, Libor K{\v r}i{\v z}ka and Volodymyr Mazorchuk}

\begin{abstract}
We provide an explicit classification of all simple 
$\mathfrak{sl}_2$-modules that are torsion free of rank $1$
over the Cartan subalgebra. We also establish a similar result for the
first Weyl algebra and for the Lie superalgebra $\mathfrak{osp}(1|2)$.
\end{abstract}

\maketitle

{\bf Keywords:} skew Laurent polynomials, rational function, simple module, 
Lie algebra, Lie superalgebra, Weyl algebra

{\bf MSC 2020:} Primary: 16S32 Secondary: 16S99; 17B10; 17B35

\section{Introduction and description of the results}\label{s0}

\subsection{Historical overview}\label{s0.1}

Classification of simple modules over a given algebra is a fundamental
problem in contemporary representation theory. Under some natural 
assumptions, this problem is ``relatively easy'', for example, if one
works with a finite dimensional associative algebra over an algebraically
closed field. However, if the algebra in question is infinite dimensional,
this problem is usually ``hard'' or ``very hard''.

Classical examples of infinite dimensional algebras are the universal 
enveloping algebras of Lie algebras. The problem of classification of 
simple modules over finite dimensional complex Lie algebras is wide open.
In this statement, the smallest Lie algebra $\mathfrak{sl}_2$ comes
with a disclaimer. The problem of classification of all
simple $\mathfrak{sl}_2$-modules splits naturally into the following 
two subproblems:
\begin{itemize}
\item classification of all weight modules with respect to a Cartan subalgebra;
\item classification of all torsion free modules with respect to the same
Cartan subalgebra.
\end{itemize}
Classification of weight $\mathfrak{sl}_2$-modules is by now very well understood.
It goes back to some unpublished notes by Gabriel (as mentioned in \cite{Di96}) 
and can be found in  \cite{Dr83,Di96,Ma10}. The  papers \cite{Bl79,Bl81} 
establish a result which reduces classification of simple $\mathfrak{sl}_2$-modules 
that are torsion free with respect to a Cartan subalgebra to the problem of
describing equivalence classes of irreducible elements of a certain 
non-commutative Euclidean domain. This is, theoretically, very nice. However,
this result is in no way explicit.

Various explicit results available in the literature cover some special cases,
for example, the case of Whittaker modules, see \cite{AP}, or the case of
modules which are free of rank one over the Cartan subalgebra, see \cite{Ni}.
The recent paper \cite{GNZ} studies and gives an explicit description for a family
of simple $\mathfrak{sl}_2$-modules that are free of rank two over the
Cartan subalgebra.

\subsection{Description of the results}\label{s0.2}

In the present paper we provide an explicit classification of 
simple $\mathfrak{sl}_2$-modules (over the complex numbers) that are 
torsion free of rank one over the Cartan subalgebra. The classification is given in
terms of the following combinatorial parameters:
\begin{itemize}
\item a complex number $\vartheta$ which describes the central character
of our module;
\item another non-zero complex number which describes the leading term
of a certain polynomial;
\item and a function from a certain subset of complex numbers
(a strip in which the real part varies within a certain half-interval)
to the set of integers.
\end{itemize}
The latter function encodes a monic rational function.

Our main result, Theorem~\ref{thm-main}, provides an explicit 
description of the basis in our module, which is realized inside
the field of rational functions over the Cartan subalgebra.

As a bonus, we also obtain a similar result for the first Weyl algebra
and for the Lie superalgebra $\mathfrak{osp}(1\vert 2)$.

\subsection{Methods}\label{s0.3}

Our approach follows the theoretical description of 
\cite{Bl79,Bl81}, see also \cite{Ba} for a generalization of
this description to the so-called {\em Generalized Weyl Algebras}.
For a fixed central character $\vartheta$, there is a classical 
way to realize the corresponding primitive quotient
$U(\mathfrak{sl}_2)/(\mathtt{c}-\vartheta)$, where
$\mathtt{c}$ is the Casimir element, inside the 
skew Laurent polynomial ring $R=\mathbb{C}(\mathtt{h})[x,x^{-1},\sigma]$,
where the automorphism $\sigma$ of  $\mathbb{C}(\mathtt{h})$ is defined
by sending $\mathtt{h}$ to  $\mathtt{h}-2$.

The ring $R$ is a non-commutative Euclidean domain. $R$-modules of 
rank $1$ correspond to $R$-module structure on $\mathbb{C}(\mathtt{h})$
and hence are classified by certain equivalence classes of 
elements in $\mathbb{C}(\mathtt{h})$. We give full details for this 
classification in Section~\ref{s1}. 

From the general theory, see \cite{Bl79,Bl81,Ma10}, we know that simple
$\mathfrak{sl}_2$-modules that are torsion free of rank $1$ over
$\mathbb{C}[\mathtt{h}]$ are exactly the simple socles of the 
simple $R$-modules described in the previous paragraph.
Hence the final step of our  classification is an explicit 
determination of these simple socles. This requires some 
clever choices as well as a certain amount of care.

\subsection{Structure of the paper}\label{s0.4}

In Section~\ref{s1}, we collect all the preliminaries to
provide an explicit  classification of simple rank
$1$ modules over the skew Laurent polynomial algebra
$\mathbb{C}(\mathtt{h})[x,x^{-1},\sigma]$.
In Section~\ref{s2}, we apply this classification to 
classify simple  $\mathfrak{sl}_2$-modules that are 
torsion free of rank $1$ over the Cartan subalgebra.
In Section~\ref{s3}, we obtain a similar result for
the first Weyl algebra. Finally, in Section~\ref{s4}, 
we obtain a similar result for
the Lie superalgebra $\mathfrak{osp}(1|2)$.

\subsection*{Acknowledgements}
The first author is partially supported by Simons Collaboration Grant 855678. The third author is partially supported by the Swedish Research Council.
We thank the organizers of the ``Representation Theorem on Ice'' conference
that was held in Link{\"o}ping in January-February 2026,
which inspired our collaboration. 
\vspace{5mm}

\section{Modules over skew Laurent polynomial algebras}\label{s1}

\subsection{Skew Laurent polynomial algebras over fields}\label{s1.1}

Let $\Bbbk$ be a field and $\sigma$ an automorphism of $\Bbbk$.
The corresponding {\em skew Laurent polynomial algebra} $\Bbbk[x,x^{-1},\sigma]$
is the $\Bbbk$-algebra with basis $\{x^i\,:\,i\in\mathbb{Z}\}$ in which the
multiplication is defined via
\begin{displaymath}
x^ix^j=x^{i+j}\quad \text{ and }\quad
xr=\sigma(r)x,
\end{displaymath}
for all $i,j\in\mathbb{Z}$ and $r\in \Bbbk$.  

\subsection{The $\sigma$-subgroup of $\Bbbk^{\times}$}\label{s1.2}

Let $\Bbbk$ be a field and $\sigma$ an automorphism of $\Bbbk$.
Denote by $G_\sigma$ the set of all elements in $\Bbbk$
of the form $\frac{r}{\sigma(r)}$, where $r\in\Bbbk^{\times}$.

\begin{lemma}\label{lem1}
The subset $G_\sigma$ is, in fact, a subgroup of $\Bbbk^{\times}$. 
\end{lemma}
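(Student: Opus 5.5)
The plan is to realize $G_\sigma$ as the image of a group homomorphism. Consider the map
\begin{displaymath}
\varphi\colon \Bbbk^{\times}\to\Bbbk^{\times},\qquad r\mapsto \frac{r}{\sigma(r)}.
\end{displaymath}
First, $\varphi$ is well defined with values in $\Bbbk^{\times}$. Since $\sigma$ is an automorphism of the field $\Bbbk$, we have $\sigma(r)\neq 0$ whenever $r\neq 0$. Hence the quotient makes sense and is itself nonzero.

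Second, $\varphi$ is multiplicative. This uses that $\sigma$ is a ring homomorphism and that $\Bbbk$ is commutative:
\begin{displaymath}
\varphi(rs)=\frac{rs}{\sigma(r)\sigma(s)}=\frac{r}{\sigma(r)}\cdot\frac{s}{\sigma(s)}=\varphi(r)\varphi(s).
\end{displaymath}
The image of a group homomorphism is a subgroup, and by definition $G_\sigma=\varphi(\Bbbk^{\times})$, so the lemma follows.

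If we prefer to avoid quoting the general fact, we can check the subgroup axioms directly:
\begin{itemize}
\item $1=\varphi(1)$;
\item closure under products is the displayed identity;
\item $\varphi(r)^{-1}=\frac{\sigma(r)}{r}=\varphi(r^{-1})$, because $\sigma(r^{-1})=\sigma(r)^{-1}$.
\end{itemize}

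There is no real obstacle here. The only points needing care are that $\sigma(r)\neq0$ and the use of commutativity of $\Bbbk$ in the multiplicativity step.
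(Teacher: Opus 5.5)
Your proof is correct and follows the paper's argument. The paper checks the three subgroup axioms using exactly your identities: $1=\frac{1}{\sigma(1)}$, $\frac{r}{\sigma(r)}\cdot\frac{s}{\sigma(s)}=\frac{rs}{\sigma(rs)}$, and $\bigl(\frac{r}{\sigma(r)}\bigr)^{-1}=\frac{r^{-1}}{\sigma(r^{-1})}$. Viewing $G_\sigma$ as the image of the homomorphism $r\mapsto r/\sigma(r)$ just packages these same identities.
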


\begin{proof}
Clearly, $1=\frac{1}{\sigma(1)}\in G_\sigma$. Further,
since  $\frac{r}{\sigma(r)}\cdot \frac{s}{\sigma(s)}=\frac{rs}{\sigma(rs)}$,
for all $r,s\in\Bbbk$, it follows that $G_\sigma$ is closed under multiplication.
Finally, we have $\left(\frac{r}{\sigma(r)}\right)^{-1}=\frac{r^{-1}}{\sigma(r^{-1})}$ 
and hence $G_\sigma$ is closed under taking the inverse element. The claim follows. 
\end{proof}

We will call $G_\sigma$ the {\em $\sigma$-subgroup of $\Bbbk^{\times}$}.
Note that $G_\sigma$ acts on $\Bbbk$
(and hence also on $\Bbbk^{\times}$) by multiplication.

\begin{example}\label{ex2}
If $\sigma=\mathrm{id}_\Bbbk$, then $G_\sigma=\{1\}$. 
\end{example}

\begin{example}\label{ex3}
If   $\Bbbk=\mathbb{C}$ and $\sigma$ is the complex conjugation
$\sigma(a+b\mathbf{i})=a-b\mathbf{i}$, for $a,b\in\mathbb{R}$, then 
$G_\sigma=\{z\in \mathbb{C}\,:\, |z|=1\}$. Indeed, we have
\begin{displaymath}
\left|\frac{a+b\mathbf{i}}{a-b\mathbf{i}}\right| = 1
\end{displaymath}
and hence $G_\sigma\subset \{z\in \mathbb{C}\,:\, |z|=1\}$.
On the other hand, if $|z|=1$, then we have $z=\cos(\varphi)+\sin(\varphi)\mathbf{i}$,
for some $\varphi\in [0,2\pi)$, and thus
\begin{displaymath}
z=\frac{\cos(\frac{1}{2}\varphi)+\sin(\frac{1}{2}\varphi)\mathbf{i}}
{\cos(\frac{1}{2}\varphi)-\sin(\frac{1}{2}\varphi)\mathbf{i}}\in G_\sigma. 
\end{displaymath}
\end{example}

\subsection{Another example}\label{s1.3}

Let $\mathbb{L}$ be an algebraically closed field and $a$ a fixed
element of $\mathbb{L}^\times$. Consider $\Bbbk=\mathbb{L}(x)$, the field of rational functions,  and let
$\sigma$ be defined as the identity on $\mathbb{L}$ and, additionally, 
via $x\mapsto x+a$.

As $\mathbb{L}$ is algebraically closed, any element 
$\alpha\in \mathbb{L}(x)^\times$ can be uniquely written in the form
\begin{displaymath}
c_\alpha\prod_{t\in \mathbb{L}} (x-t)^{\mathtt{m}_\alpha(t)},
\end{displaymath}
where $c_\alpha\in \mathbb{L}^\times$ and $\mathtt{m}_\alpha
:\mathbb{L}\to \mathbb{Z}$
is a function such that 
$\mathtt{m}_\alpha(t)=0$ for all but finitely many values of $t$.
We will call $c_\alpha$ the {\em absolute value} and
$\mathtt{m}_\alpha$ the {\em exponent function}.

For a function $\mathtt{m}:\mathbb{L}\to \mathbb{Z}$,
the set of all $t\in \mathbb{L}$ for which $\mathtt{m}(t)\neq 0$
will be called  the {\em support} of $\mathtt{m}$.  Then we
can reformulate our condition on $\mathtt{m}$ in the previous
paragraph by requiring that 
the support of $\mathtt{m}$ is finite.

As the exponents behave additively under multiplication,
it follows that, mapping $\alpha$ to $(c_\alpha,\mathtt{m}_\alpha)$ 
defines an isomorphism between
the multiplicative group $\mathbb{L}(x)^\times$
and the direct product of the multiplicative
group $\mathbb{L}^\times$ with the additive group 
$\mathrm{Fsfun}(\mathbb{L},\mathbb{Z})$ of all
functions from $\mathbb{L}$ to $\mathbb{Z}$ with finite support.

Denote by $\mathrm{Fsfun}_\sigma(\mathbb{L},\mathbb{Z})$ the set
of all $\mathtt{m}\in\mathrm{Fsfun}(\mathbb{L},\mathbb{Z})$
satisfying the following condition: for any $t\in\mathbb{L}$,
we have
\begin{displaymath}
\sum_{i\in \mathbb{Z}}\mathtt{m}(t+ia) =0. 
\end{displaymath}
In other words, this condition says that the sum of the values
of $\mathtt{m}$ over any $\langle\sigma\rangle$-orbit in $\mathbb{L}$
is zero.
It is easy to see that $\mathrm{Fsfun}_\sigma(\mathbb{L},\mathbb{Z})$
is a subgroup of $\mathrm{Fsfun} (\mathbb{L},\mathbb{Z})$.
It is also easy to see that $\mathrm{Fsfun}_\sigma(\mathbb{L},\mathbb{Z})$
is generated, as an abelian group, by the functions $\mathtt{d}_s$,
where $s\in \mathbb{L}$, defined as follows
\begin{displaymath}
\mathtt{d}_s(t)=
\begin{cases}
1, & t=s;\\
-1, & t=s-a;\\
0,& \text{else}.
\end{cases}
\end{displaymath}

\begin{proposition}\label{prop4}
The restriction defines an isomorphism between $G_\sigma$
and $\mathrm{Fsfun}_\sigma(\mathbb{L},\mathbb{Z})$.
\end{proposition}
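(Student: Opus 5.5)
The restriction in question is that of the isomorphism $\alpha\mapsto(c_\alpha,\mathtt{m}_\alpha)$ between $\mathbb{L}(x)^\times$ and $\mathbb{L}^\times\times\mathrm{Fsfun}(\mathbb{L},\mathbb{Z})$ to the subgroup $G_\sigma$ from Lemma~\ref{lem1}. The proposition then amounts to two claims: the absolute value of every element of $G_\sigma$ is $1$, and the exponent functions of elements of $G_\sigma$ form exactly $\mathrm{Fsfun}_\sigma(\mathbb{L},\mathbb{Z})$. The first claim makes $\alpha\mapsto\mathtt{m}_\alpha$ injective on $G_\sigma$, and it is a homomorphism because it is a restriction of one. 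So it suffices to prove the second claim, namely that the image is $\mathrm{Fsfun}_\sigma(\mathbb{L},\mathbb{Z})$.

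First, I compute $\sigma$ on the factorized form. Since $\sigma$ fixes $\mathbb{L}$ and sends $x\mapsto x+a$, we have $\sigma(x-t)=x-(t-a)$. Hence for $r=c_r\prod_t(x-t)^{\mathtt{m}_r(t)}$ we get $c_{\sigma(r)}=c_r$ and $\mathtt{m}_{\sigma(r)}(t)=\mathtt{m}_r(t+a)$. Consequently the absolute value of $r/\sigma(r)$ is $c_r/c_r=1$, and
\begin{displaymath}
\mathtt{m}_{r/\sigma(r)}(t)=\mathtt{m}_r(t)-\mathtt{m}_r(t+a).
\end{displaymath}
Summing this over an orbit $\{t+ia\,:\,i\in\mathbb{Z}\}$, the sum telescopes. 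Only finitely many terms are nonzero, and $\mathtt{m}_r(t+ia)$ vanishes for $|i|$ large; note that $a\neq0$, and if $\mathbb{L}$ has characteristic $p$ the orbits are finite cycles, where the cyclic sum is zero trivially. So the sum is $0$, and the image lies in $\mathrm{Fsfun}_\sigma(\mathbb{L},\mathbb{Z})$.

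For surjectivity I use the stated generators $\mathtt{d}_s$. Taking $r=x-s$ gives
\begin{displaymath}
\frac{r}{\sigma(r)}=\frac{x-s}{x-s+a},
\end{displaymath}
whose exponent function is $+1$ at $s$ and $-1$ at $s-a$, which is exactly $\mathtt{d}_s$. Since the image is a subgroup, it therefore contains all of $\mathrm{Fsfun}_\sigma(\mathbb{L},\mathbb{Z})$.

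The only step needing any care is the claim that the $\mathtt{d}_s$ generate $\mathrm{Fsfun}_\sigma(\mathbb{L},\mathbb{Z})$, which the paper states as easy. If I were to justify it, I would argue by induction on the size of the support. Given a nonzero $\mathtt{m}$ in $\mathrm{Fsfun}_\sigma(\mathbb{L},\mathbb{Z})$, pick an orbit on which it is nonzero and a point $s$ in the support. Subtracting $\mathtt{m}(s)\mathtt{d}_s$ moves the value at $s$ onto $s-a$ while keeping all orbit sums zero. Repeating this pushes all the mass in the orbit to a single point, where it must vanish because the orbit sum is zero.
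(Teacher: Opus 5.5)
Your proposal is correct and follows the paper's proof step by step: the absolute value of $r/\sigma(r)$ is $1$, which gives the embedding into $\mathrm{Fsfun}(\mathbb{L},\mathbb{Z})$; orbit sums vanish because $\mathtt{m}_{\sigma(r)}(t)=\mathtt{m}_r(t+a)$ (your telescoping is the explicit form of the paper's ``follows from the definition of $\sigma$''); and surjectivity comes from $\frac{x-s}{\sigma(x-s)}\mapsto\mathtt{d}_s$. In your optional argument that the $\mathtt{d}_s$ generate, the size of the support is not a valid induction measure, since pushing mass from $s$ to a point $s-a$ outside the support leaves that size unchanged; the iteration does terminate if you always push from the topmost support point of an orbit toward the bottommost one, which takes finitely many steps.
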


\begin{proof} 
Clearly, for any $\alpha\in\mathbb{L}(x)^\times$, the absolute value
of $\frac{\alpha}{\sigma(\alpha)}$ equals $1$. Therefore the 
isomorphism $\mathbb{L}(x)^\times \to \mathbb{L}^\times \times \mathrm{Fsfun}(\mathbb{L},\mathbb{Z})$ restricts to an embedding of $G_\sigma$ into
$\mathrm{Fsfun}(\mathbb{L},\mathbb{Z})$. Also,
for any $\alpha\in\mathbb{L}(x)^\times$ and any $t\in\mathbb{L}$,
setting $\beta:=\frac{\alpha}{\sigma(\alpha)}$, we have
\begin{displaymath}
\sum_{i\in \mathbb{Z}}\mathtt{m}_\beta(t+ia) =
\sum_{i\in \mathbb{Z}}\mathtt{m}_\alpha(t+ia)-
\sum_{i\in \mathbb{Z}}\mathtt{m}_{\sigma(\alpha)}(t+ia)=
\sum_{i\in \mathbb{Z}}\mathtt{m}_\alpha(t+ia)-
\sum_{i\in \mathbb{Z}}\mathtt{m}_\alpha(t+ia)=
0,
\end{displaymath}
where the equality
\begin{displaymath}
\sum_{i\in \mathbb{Z}}\mathtt{m}_{\sigma(\alpha)}(t+ia)=
\sum_{i\in \mathbb{Z}}\mathtt{m}_\alpha(t+ia)
\end{displaymath}
follows from the definition of $\sigma$. Therefore
the restriction map embeds $G_\sigma$
into $\mathrm{Fsfun}_\sigma(\mathbb{L},\mathbb{Z})$.

It remains to prove that this embedding is also surjective.
Since $\mathrm{Fsfun}_\sigma(\mathbb{L},\mathbb{Z})$ is generated
by  $\mathtt{d}_s$, where $s\in \mathbb{L}$, it is enough to
show that each $\mathtt{d}_s$ belongs to the image.
Clearly, $\mathtt{d}_s$ is the image of $\frac{x-s}{\sigma(x-s)}$.
The claim follows.
\end{proof}

\subsection{$\Bbbk[x,x^{-1},\sigma]$-modules of dimension $1$}\label{s1.4}

Let $R=\Bbbk[x,x^{-1},\sigma]$ be a skew Laurent polynomial algebra.
Here we are interested in classification of simple $R$-modules
that have dimension $1$ over $\Bbbk$. 

\begin{lemma}\label{lem5}
An $R$-module structure on $\Bbbk$ is uniquely determined 
by an element $u\in \Bbbk^{\times}$ via $x\cdot 1=u$.
\end{lemma}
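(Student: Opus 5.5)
The plan is to show that the assignment \(u\mapsto\) (module structure) is a bijection between \(\Bbbk^{\times}\) and the set of \(R\)-module structures on \(\Bbbk\). Here "an \(R\)-module structure on \(\Bbbk\)" means one extending the regular left \(\Bbbk\)-module structure. So \(r\in\Bbbk\) acts by left multiplication, and the only additional datum is the action of \(x\) (and of \(x^{-1}\)).

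First, suppose such a structure is given and set \(u:=x\cdot 1\in\Bbbk\). The defining relation \(xr=\sigma(r)x\) in \(R\) forces
\begin{displaymath}
x\cdot r = x\cdot(r\cdot 1)=(xr)\cdot 1=(\sigma(r)x)\cdot 1=\sigma(r)u
\end{displaymath}
for all \(r\in\Bbbk\). Hence the action of \(x\) is determined by \(u\). Since \(x\) is invertible in \(R\), the map \(r\mapsto \sigma(r)u\) must be bijective on \(\Bbbk\), so \(u\neq 0\). Moreover \(x^{-1}\) must act by the inverse map \(r\mapsto \sigma^{-1}(ru^{-1})\). Consequently the action of every \(x^i\), and hence of all of \(R\), is determined by \(u\); this gives uniqueness.

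Conversely, given \(u\in\Bbbk^{\times}\), I define \(x\) to act by \(r\mapsto\sigma(r)u\) and \(x^{-1}\) by \(r\mapsto\sigma^{-1}(ru^{-1})\). The main point is then to check that this is a genuine \(R\)-module. The cleanest way is to note that \(R\) is presented as the \(\Bbbk\)-ring generated by \(x,x^{-1}\) subject to \(xx^{-1}=x^{-1}x=1\) and \(xr=\sigma(r)x\); indeed these relations allow every word to be rewritten in the basis \(\{rx^i\}\). It therefore suffices to verify these relations on the operators, which is a one-line computation:
\begin{itemize}
\item \(x\cdot(r s)=\sigma(r)\sigma(s)u=\sigma(r)\,(x\cdot s)\);
\item the two operators for \(x\) and \(x^{-1}\) are mutually inverse.
\end{itemize}
The only mild obstacle is justifying this presentation of \(R\). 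Alternatively, one can avoid it by directly setting \(x^i\cdot r:=\sigma^i(r)u_i\) with \(u_i:=u\sigma(u)\cdots\sigma^{i-1}(u)\) for \(i\geq 0\) (and correspondingly for \(i<0\)), and checking the multiplication rule \((r x^i)(s x^j)=r\sigma^i(s)x^{i+j}\) using \(u_{i+j}=u_i\sigma^i(u_j)\).
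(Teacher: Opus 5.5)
Your proposal is correct and follows essentially the same route as the paper: compute $x\cdot b=\sigma(b)u$ from the relation $xr=\sigma(r)x$, use invertibility of $x$ to get $u\neq 0$, and conversely define $x^{-1}$ as the inverse operator. Your extra step of checking the module axioms, either via the presentation of $R$ or via $u_{i+j}=u_i\sigma^i(u_j)$, fills in a verification the paper simply asserts.
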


\begin{proof}
If $x\cdot 1=u$, for some  $u\in \Bbbk^{\times}$, then, for any
$b\in \Bbbk$, we have $x\cdot b=x \cdot (b \cdot 1) = \sigma(b)u$. In particular,
$x\cdot{}_-$ is an invertible map on $\Bbbk$. Therefore
we can define the action of $x^{-1}$ as the inverse of the
action of $x$ and obtain an $R$-module structure on $\Bbbk$.

Conversely, given an $R$-module structure on $\Bbbk$, clearly,
$x\cdot 1$ must be non-zero. The claim follows.
\end{proof}

We will denote the above $R$-module structure on $\Bbbk$ by $N_u$.

\begin{proposition}\label{prop6}
For $u,v\in \Bbbk^{\times}$, the $R$-modules $N_u$ and $N_v$
are isomorphic if and only if $v\in G_\sigma\cdot u$.
\end{proposition}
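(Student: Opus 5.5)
Since $N_u$ and $N_v$ both have underlying space $\Bbbk$, I will describe all $R$-module homomorphisms $N_u\to N_v$ directly and read off when an isomorphism exists.

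First, any $R$-homomorphism $\varphi:N_u\to N_v$ is in particular $\Bbbk$-linear, where $\Bbbk\subset R$ acts by left multiplication on the underlying space $\Bbbk$. So $\varphi(b)=b\,r$ with $r:=\varphi(1)\in\Bbbk$, and $\varphi$ is an isomorphism exactly when $r\neq 0$. Conversely, multiplication by $r$ is always $\Bbbk$-linear, so I only need to find the condition for it to commute with the action of $x$. Commuting with $x$ then gives commuting with $x^{-1}$ automatically, since $x$ acts invertibly on both modules by Lemma~\ref{lem5}.

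Next I compute the compatibility condition. In $N_u$ we have $x\cdot b=\sigma(b)u$, so
\begin{displaymath}
\varphi(x\cdot b)=\sigma(b)\,u\,r,
\end{displaymath}
while in $N_v$ we have
\begin{displaymath}
x\cdot\varphi(b)=\sigma(br)\,v=\sigma(b)\,\sigma(r)\,v.
\end{displaymath}
Since $\Bbbk$ is commutative, compatibility for all $b$ (equivalently, for $b=1$) is the single equation $ur=\sigma(r)v$. For $r\neq0$ this reads
\begin{displaymath}
v=\frac{r}{\sigma(r)}\,u.
\end{displaymath}

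The conclusion follows in both directions. If $N_u\cong N_v$, then $r=\varphi(1)\neq 0$ satisfies this equation, so $v\in G_\sigma\cdot u$. Conversely, if $v=\frac{r}{\sigma(r)}u$ for some $r\in\Bbbk^\times$, then $b\mapsto br$ is an isomorphism $N_u\to N_v$.

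There is no real obstacle here. The only point needing care is the order of factors in $x\cdot(br)=\sigma(br)v$, which is harmless because $\Bbbk$ is commutative.
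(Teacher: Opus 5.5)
Your proposal is correct and follows essentially the same argument as the paper. In both, $\Bbbk$-linearity forces $\varphi(b)=br$ with $r=\varphi(1)$, and compatibility with $x$ reduces to $ur=\sigma(r)v$, i.e.\ $v=\frac{r}{\sigma(r)}u$. The converse in both is the observation that multiplication by $r$ intertwines the two actions.
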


\begin{proof}
Let $\varphi:N_u\to N_v$ be an $R$-module isomorphism.
Set $r:=\varphi(1)\in\Bbbk^{\times}$. Then $\varphi(b)=br$, for all $b\in\Bbbk$.
Then, for $b\in \Bbbk$, the equality
$\varphi(x\cdot b)=x\cdot \varphi(b)$ amounts to 
\begin{displaymath}
\sigma(b)ur=\sigma(b)\sigma(r)v. 
\end{displaymath}
If $b\neq 0$, we divide by $\sigma(b)$ to obtain
$ur=\sigma(r)v$, that is, $v=\frac{r}{\sigma(r)}u$.
In other words, $v\in G_\sigma\cdot u$.

Conversely, if $v\in G_\sigma\cdot u$, then 
$v=\frac{r}{\sigma(r)}u$ for some $r\in\Bbbk^{\times}$.
From the computation above, it follows that 
the map $\varphi:N_u\to N_v$ given by 
$\varphi(b)=br$, for all $b\in\Bbbk$,
intertwines the actions of $x$ on $N_u$ and $N_v$ and hence
is an $R$-module isomorphism between $N_u$ and $N_v$.
\end{proof}

\begin{corollary}\label{cor8}
The isomorphism classes of one-dimensional 
$R$-modules are in a natural bijection with $\Bbbk^{\times}/G_{\sigma}$.
\end{corollary}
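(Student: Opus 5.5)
The plan is to combine Lemma~\ref{lem5} with Proposition~\ref{prop6}. The only real point is to fix the meaning of ``one-dimensional'': by this I mean $R$-modules that are one-dimensional over $\Bbbk$. Every such module $M$ is isomorphic, as a $\Bbbk$-vector space, to $\Bbbk$, by choosing a basis vector $m$ and identifying $bm$ with $b$.

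The first step is to transport the $R$-action along this identification. This gives an $R$-module structure on $\Bbbk$ that is isomorphic to $M$. By Lemma~\ref{lem5}, that structure is $N_u$ with $u$ determined by $x\cdot m=um$, and $u\neq 0$ because $x$ acts invertibly. So the assignment $u\mapsto [N_u]$ is a surjective map from $\Bbbk^{\times}$ onto the set of isomorphism classes of one-dimensional $R$-modules.

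The second step uses Proposition~\ref{prop6}: $N_u\cong N_v$ if and only if $v\in G_\sigma\cdot u$. Lemma~\ref{lem1} shows that $G_\sigma$ is a subgroup, so this relation is exactly the relation of lying in the same coset of $G_\sigma$ in the abelian group $\Bbbk^\times$. The map therefore factors through a bijection
\begin{displaymath}
\Bbbk^\times/G_\sigma\;\longrightarrow\;\{\text{iso classes of one-dimensional } R\text{-modules}\},\qquad uG_\sigma\mapsto [N_u].
\end{displaymath}

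For naturality, note that a different choice of basis vector $m'=rm$ changes $u$ to $\frac{\sigma(r)}{r}u$, which lies in the same coset. Hence the inverse map $[M]\mapsto uG_\sigma$ is well defined and independent of any choices.

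There is no real obstacle here. The only care needed is in the first step: the transported action must be $\Bbbk$-semilinear in the right way, namely $x\cdot(bm)=\sigma(b)\,x\cdot m$, and this holds because $xb=\sigma(b)x$ in $R$.
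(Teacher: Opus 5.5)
Your proposal is correct and follows the same route as the paper, which gives no separate proof and treats the corollary as immediate from Lemma~\ref{lem5} (every one-dimensional module is some $N_u$ with $u\in\Bbbk^\times$) and Proposition~\ref{prop6} ($N_u\cong N_v$ if and only if $v\in G_\sigma\cdot u$). Your transport-of-structure step, and your check that rescaling the basis vector by $r$ multiplies $u$ by $\frac{\sigma(r)}{r}\in G_\sigma$, make explicit what the paper leaves implicit.
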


We note that the group $\Bbbk^{\times}/G_{\sigma}$ is isomorphic to the 
first cohomology group $\mathrm{H}^1 (\langle \sigma \rangle, \Bbbk^{\times})$.

\section{Application to $\mathfrak{sl}_2$}\label{s2}

\subsection{General setup}\label{s2.1}

In this section, we work over $\mathbb{C}$
(or any other uncountable 
algebraically closed field of characteristic $0$ will also do). 
For a Lie algebra
$\mathfrak{a}$, we denote by $U(\mathfrak{a})$ the universal enveloping
algebra of $\mathfrak{a}$.

\subsection{The Lie algebra $\mathfrak{sl}_2$}\label{s2.2}

We will use \cite{Ma10} as a general reference for the Lie algebra
$\mathfrak{sl}_2$ and its representations.

Denote by $\mathfrak{g}$ the Lie algebra $\mathfrak{sl}_2$ with the 
standard basis
\begin{displaymath}
\mathtt{e}:=\left(\begin{array}{cc}0&1\\0&0\end{array}\right),\quad
\mathtt{f}:=\left(\begin{array}{cc}0&0\\1&0\end{array}\right),\quad
\mathtt{h}:=\left(\begin{array}{cc}1&0\\0&-1\end{array}\right).
\end{displaymath}
The Lie bracket on $\mathfrak{g}$ is then uniquely determined by
\begin{displaymath}
[\mathtt{h},\mathtt{e}]=2\mathtt{e},\quad 
[\mathtt{h},\mathtt{f}]=-2\mathtt{f},\quad 
[\mathtt{e},\mathtt{f}]=\mathtt{h}. 
\end{displaymath}
Consequently, in $U(\mathfrak{g})$, we have 
\begin{equation}\label{eq1}
\mathtt{h}\mathtt{e}=\mathtt{e}(\mathtt{h}+2)\quad
\text{ and }
\mathtt{h}\mathtt{f}=\mathtt{f}(\mathtt{h}-2). 
\end{equation}
We also have the Casimir element
\begin{equation}\label{eq1a}
\mathtt{c}:=(\mathtt{h}+1)^2+4 \mathtt{f}\mathtt{e}=
(\mathtt{h}-1)^2+4 \mathtt{e}\mathtt{f}\in U(\mathfrak{g}).
\end{equation}

For $\vartheta\in\mathbb{C}$, we denote by $U_\vartheta$
the quotient $U(\mathfrak{g})/U(\mathfrak{g})(\mathtt{c}-\vartheta)$.

\subsection{$U_\vartheta$ and skew Laurent polynomial algebra}\label{s2.3}

Consider the field $\Bbbk:=\mathbb{C}(\mathtt{h})$ of rational functions and its
automorphism $\sigma$ given by the identity on $\mathbb{C}$ and,
additionally, by sending $\mathtt{h}$ to $\mathtt{h}-2$. 
We can now consider the corresponding skew Laurent polynomial
algebra $R:=\Bbbk[x,x^{-1},\sigma]$.

For a fixed $\vartheta\in\mathbb{C}$, mapping
$\mathtt{e}$ to $x$ and $\mathtt{f}$ to 
\begin{displaymath}
\frac{\vartheta-(\mathtt{h}+1)^2}{4}x^{-1}
\end{displaymath}
gives rise to an injective algebra homomorphism $\Phi$ from
$U_\vartheta$ to $R$
(see \cite[Theorem~6.2.12]{Ma10}). Note that $\Phi(\mathtt{h})=\mathtt{h}$.
Pulling back via $\Phi$ defines the restriction  functor
\begin{displaymath}
\mathrm{Res}^{R}_{U_\vartheta}:R\text{-Mod}\to  U_\vartheta\text{-Mod}.
\end{displaymath}
In the remainder of this section, we will identify 
$U_\vartheta$ with its image under $\Phi$.

Let us recall that, as a $\mathbb{C}$-vector space, the algebra
$\mathbb{C}(\mathtt{h})$  has a basis consisting of  all 
monomials  $\mathtt{h}^i$, where $i\geq 0$, as well as  
all fractions of the form $\frac{1}{(\mathtt{h}-t)^m}$, where $t\in \mathbb{C}$
and $m\in\mathbb{Z}_{>0´}$. We will denote this basis by $\mathbf{B}$.

\subsection{Simple $R$-modules and the corresponding
simple $U_\vartheta$-modules}\label{s2.4}

Recall, from Subsection~\ref{s1.4}, that simple $R$-modules
of dimension $1$ over $\Bbbk$ are of the form $N_u$, for some
$u\in \Bbbk$. The action of $x$ on $N_u$ is given by $x\cdot b=\sigma(b)u$,
for $b\in\Bbbk$. This means that 
\begin{displaymath}
\mathtt{e}\cdot b=\sigma(b)u,\quad\text{ for }\quad b\in\Bbbk.
\end{displaymath}
Consequently, we also have  
\begin{displaymath}
\mathtt{f}\cdot b=
\frac{(\vartheta-(\mathtt{h}+1)^2)}{4}\cdot
\frac{\sigma^{-1}(b)}{\sigma^{-1}(u)},\quad\text{ for }\quad b\in\Bbbk.
\end{displaymath}
Also, $N_u\cong N_v$ if and only if
$u\in G_\sigma v$. Given such $N_u$, the corresponding 
$U_\vartheta$-module $\mathrm{Res}^{R}_{U_\vartheta}(N_u)$
has a unique simple submodule, see \cite[Lemma~6.3.8]{Ma10}.
We denote this simple $U_\vartheta$-module by $L_u$.

From Subsection~\ref{s1.3} we know that the group $G_\sigma$
is isomorphic to $\mathrm{Fsfun}_\sigma(\mathbb{L},\mathbb{Z})$.
For a fixed $\omega\in\mathbb{R}$,
denote by $\mathbb{C}_\omega$ the set of all $z\in\mathbb{C}$ whose real
part belongs to the half-interval $[\omega,\omega+2)$. Then $\mathbb{C}_\omega$
is a cross-section of the (additive) action of $2\mathbb{Z}$ on $\mathbb{C}$
(note that our $\sigma$ is defined in terms of this action). Then,
each coset $\zeta\in \Bbbk^{\times}/\mathrm{Fsfun}_\sigma(\mathbb{C},\mathbb{Z})$
contains a unique element $u\in \Bbbk^{\times}$ of the form
\begin{equation}\label{eq11a}
c_u\prod_{t\in \mathbb{C}_\omega} (\mathtt{h}-t)^{\mathtt{m}_u(t)}.
\end{equation}
Consequently, by Corollary~\ref{cor8}, the isomorphism classes of 
simple $R$-module structures on $\mathbb{C}(x)$ are in bijection 
with pairs $(c,\mathtt{m})$, where $c\in\mathbb{C}^{\times}$ and
$\mathtt{m}\in \mathrm{Fsfun}(\mathbb{C}_\omega,\mathbb{Z})$.
Therefore, by \cite[Theorem~6.3.6]{Ma10}, the set of all
$L_u$, for $u$ given by Equation~\ref{eq11a}, is a complete and
irredundant set of representatives of the isomorphism classes of
simple $U_\vartheta$-modules which are torsion free of rank one
over $U(\mathtt{h})$.

We will make the following choice of $\omega$, depending on
$\vartheta$: let $r_1$ and $r_2$ be the two roots
of $\vartheta-(\mathtt{h}+1)^2$ (possibly, $r_1=r_2$). We set 
$\omega$ to be $3$ plus the maximum between the real parts of
$r_1$ and $r_2$. For example, if $\vartheta=0$, then 
$r_1=r_2=-1$ and $\omega=2$. In this way, we ensure that 
the real parts of both $r_1$ and $r_2$ are strictly less than
and even ``far away'' from $\omega$. This will simplify 
some of the results and also some of the arguments to follow.

\subsection{$L_u$ vs $\mathbb{C}[\mathtt{h}]$}\label{s2.5}

Let us now describe the structure of $L_u$ in more detail.
For this we assume that 
\begin{equation}\label{eq11}
u=c\prod_{t\in \mathbb{C}_\omega} (\mathtt{h}-t)^{\mathtt{m}(t)},
\end{equation}
for some $c\in\mathbb{C}^{\times}$ and $\mathtt{m}\in \mathrm{Fsfun}(\mathbb{C}_\omega,\mathbb{Z})$. 
We set
\begin{equation}\label{eq11bc}
\alpha(\mathtt{h})=\prod_{t\in \mathbb{C}_\omega:
\mathtt{m}(t)>0} (\mathtt{h}-t)^{\mathtt{m}(t)}\qquad\text{ and }\qquad
\beta(\mathtt{h})=\prod_{t\in \mathbb{C}_\omega:
\mathtt{m}(t)<0} (\mathtt{h}-t)^{-\mathtt{m}(t)}.
\end{equation}
Then we have $u=c\frac{\alpha(\mathtt{h})}{\beta(\mathtt{h})}$
with $\mathrm{g.c.d.}(\alpha(\mathtt{h}),\beta(\mathtt{h}))=1$.

Consider the set $L_u\cap\mathbb{C}[\mathtt{h}]$. This is a non-zero ideal
of $\mathbb{C}[\mathtt{h}]$ as any non-zero $\mathbb{C}[\mathtt{h}]$-sub\-mo\-dule
of $\mathbb{C}(\mathtt{h})$ intersects $\mathbb{C}[\mathtt{h}]$ non-trivially.
As $\mathbb{C}[\mathtt{h}]$ is a PID, this ideal is generated by a 
uniquely determined monic non-zero polynomial $f_u(\mathtt{h})$.

Recall that $r_1$ and $r_2$ denote the two roots
of $\vartheta-(\mathtt{h}+1)^2$ (we have $r_1=r_2$ if and only if $\vartheta=0$).

\begin{proposition}\label{prop-s1.5-1}
We have  $f_u(\mathtt{h})=1$ and hence 
$\mathbb{C}[\mathtt{h}]\subset L_u$.
\end{proposition}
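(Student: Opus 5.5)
The plan is to exploit the fact that $L_u$ is simultaneously a $\mathbb{C}[\mathtt{h}]$-submodule of $\mathbb{C}(\mathtt{h})$ and stable under $\mathtt{e}$ and $\mathtt{f}$. This yields two divisibility conditions on $g:=f_u(\mathtt{h})$, which we then play against each other by looking at the roots of $g$ with extremal real parts. We work with $u=c\frac{\alpha(\mathtt{h})}{\beta(\mathtt{h})}$ as in \eqref{eq11bc}. All roots of $\alpha$ and $\beta$ lie in $\mathbb{C}_\omega$, so their real parts lie in $[\omega,\omega+2)$.

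\emph{Step 1: two divisibility conditions.} Since $g\in L_u$, we have $\mathtt{e}\cdot g=c\,\sigma(g)\frac{\alpha}{\beta}\in L_u$. Multiplying by $\beta\in\mathbb{C}[\mathtt{h}]\subset U_\vartheta$ shows that $c\,\sigma(g)\alpha\in L_u\cap\mathbb{C}[\mathtt{h}]=(g)$. Hence $g\mid \sigma(g)\alpha$. Similarly, by the formula from Subsection~\ref{s2.4},
\begin{displaymath}
\mathtt{f}\cdot g=\frac{\vartheta-(\mathtt{h}+1)^2}{4c}\cdot\frac{\sigma^{-1}(g)\,\sigma^{-1}(\beta)}{\sigma^{-1}(\alpha)}\in L_u .
\end{displaymath}
Multiplying by $\sigma^{-1}(\alpha)$ gives $g\mid(\vartheta-(\mathtt{h}+1)^2)\,\sigma^{-1}(g)\,\sigma^{-1}(\beta)$.

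\emph{Step 2: root bookkeeping.} Suppose $g\neq 1$ and let $Z$ be its (finite, nonempty) set of roots. Since $\sigma(g)(\mathtt{h})=g(\mathtt{h}-2)$, the roots of $\sigma(g)$ are $Z+2$, and the roots of $\sigma^{-1}(g)$ are $Z-2$. The first divisibility gives $Z\subset (Z+2)\cup\mathrm{roots}(\alpha)$. Take $s\in Z$ with minimal real part. Then $s\notin Z+2$, so $s$ is a root of $\alpha$, and hence $\mathrm{Re}\, s\geq\omega$. Consequently every element of $Z$ has real part $\geq\omega$.

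The second divisibility gives $Z\subset (Z-2)\cup\{r_1,r_2\}\cup(\mathrm{roots}(\beta)-2)$. Take $s'\in Z$ with maximal real part. Then $s'\notin Z-2$, so either $s'\in\{r_1,r_2\}$, or $s'$ is a root of $\beta$ shifted by $-2$. In the latter case $\mathrm{Re}\, s'<\omega$. In the former case, by the choice of $\omega$ we have $\mathrm{Re}\, s'\le\omega-3<\omega$. Either way, every element of $Z$ has real part $<\omega$, contradicting Step 2's first conclusion. Hence $Z=\emptyset$, so the monic polynomial $g$ equals $1$, and $\mathbb{C}[\mathtt{h}]=(1)\subset L_u$.

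The main point requiring care is Step 1. We must check that multiplication by $\beta$ (respectively $\sigma^{-1}(\alpha)$) stays inside $L_u$, which holds since these are elements of $\mathbb{C}[\mathtt{h}]\subset U_\vartheta$. We must also check that only root sets, not multiplicities, are needed in Step 2, so that we can pass to extremal roots without tracking exponents. The choice of $\omega$ "far" from $r_1,r_2$ is exactly what makes the two extremal-root arguments incompatible.
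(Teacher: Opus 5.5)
Your proof is correct and follows the paper's approach: it uses the same two divisibility conditions, obtained from $\beta(\mathtt{h})\mathtt{e}$ and $\alpha(\mathtt{h}+2)\mathtt{f}$ acting on $f_u$, and the same incompatibility forced by the choice of $\omega$. The only difference is that you take a root of extremal real part, whereas the paper iterates the divisibility until $f_u(\mathtt{h}-2i)$, resp.\ $f_u(\mathtt{h}+2i)$, is coprime to $f_u(\mathtt{h})$; your version is a slightly shorter route to the same contradiction.
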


\begin{proof}
For any $v(\mathtt{h})\in \mathbb{C}[\mathtt{h}]$, we have 
$\beta(\mathtt{h})\mathtt{e}\cdot v(\mathtt{h})=c v(\mathtt{h}-2)\alpha(\mathtt{h})
\in \mathbb{C}[\mathtt{h}]$. In particular,
$cf_u(\mathtt{h}-2)\alpha(\mathtt{h})$ must be divisible by 
$f_u(\mathtt{h})$. Applying $\beta(\mathtt{h})\mathtt{e}$ again and again, we get that 
$f_u(\mathtt{h}-2i)\alpha(\mathtt{h}-2(i-1))\alpha(\mathtt{h}-2(i-2))\cdots \alpha(\mathtt{h})$
must be divisible by $f_u(\mathtt{h})$, for all $i\in\mathbb{Z}_{>0}$. For some $i\gg 0$, we have 
$\mathrm{g.c.d.}(f_u(\mathtt{h}-2i),f_u(\mathtt{h}))=1$. This implies
that $f_u(\mathtt{h})$ divides the corresponding
$\alpha(\mathtt{h}-2(i-1))\alpha(\mathtt{h}-2(i-2))\cdots \alpha(\mathtt{h})$.
Hence all roots of $f_u(\mathtt{h})$ belong to the set 
\begin{equation}\label{eq-12n1}
\{t\in\mathbb{C}_\omega\,:\,\mathtt{m}(t)>0\}+2\mathbb{Z}_{\geq 0}.
\end{equation}

For any $v(\mathtt{h})\in \mathbb{C}[\mathtt{h}]$, we have 
\begin{displaymath}
\alpha(\mathtt{h}+2)\mathtt{f}\cdot v(\mathtt{h})=\frac{1}{4c} 
v(\mathtt{h}+2)\beta(\mathtt{h}+2)(\vartheta-(\mathtt{h}+1)^2). 
\end{displaymath}
In particular, $f_u(\mathtt{h}+2)\beta(\mathtt{h}+2)(\vartheta-(\mathtt{h}+1)^2)$
must be divisible by 
$f_u(\mathtt{h})$.
Applying $\alpha(\mathtt{h}+2)\mathtt{f}$ again and again, we get that 
\begin{multline*}
f_u(\mathtt{h}+2i)\cdot
\beta(\mathtt{h}+2i)
\beta(\mathtt{h}+2(i-1))\cdots 
\beta(\mathtt{h}+2)\cdot \\ \cdot
(\vartheta-(\mathtt{h}+2(i-1)+1)^2)
(\vartheta-(\mathtt{h}+2(i-2)+1)^2)\cdots 
(\vartheta-(\mathtt{h}+1)^2)
\end{multline*}
must be divisible by $f_u(\mathtt{h})$.
For some $i\gg 0$, we have 
$\mathrm{g.c.d.}(f_u(\mathtt{h}+2i),f_u(\mathtt{h}))=1$. This implies
that $f_u(\mathtt{h})$ divides the product of the other terms.
Therefore any root of
$f_u(\mathtt{h})$ belongs to the set
\begin{equation}\label{eq-eq23}
\left(\{r_1,r_2\}\cup \{t\in\mathbb{C}_\omega\,:\,\mathtt{m}(t)<0\}\right)
-2\mathbb{Z}_{\geq 0}.
\end{equation}
Due to our choice of $\omega$, the sets in Equations~\eqref{eq-12n1}
and \eqref{eq-eq23} are disjoint. Consequently, $f_u(\mathtt{h})=1$ and we are done.
\end{proof}

\subsection{Explicit description}\label{s2.6}

Recall that $\vartheta-(\mathtt{h}+1)^2=-(\mathtt{h}-r_1)(\mathtt{h}-r_2)$.
Let $t_1,t_2\in \mathbb{C}_\omega$ 
and $n_1,n_2\in \mathbb{Z}_{>0}$ be such that
$t_1-r_1=2n_1$ and $t_2-r_2=2n_2$. All this is well-defined due to our
choice of $\omega$, moreover, we have $n_1,n_2\geq 2$. 
We also denote by $m_1$ and $m_2$ the multiplicity of
$r_1$ and $r_2$ as a root of $\vartheta-(\mathtt{h}+1)^2$, respectively.

Let $K_u$ denote the subspace of $N_u$ spanned by the following elements
\begin{itemize}
\item $\mathbb{C}[\mathtt{h}]$;
\item $\frac{1}{(\mathtt{h}-s-2i)^{k}}$, 
where $i\in\mathbb{Z}_{\geq 0}$ and $1\leq k\leq -\mathtt{m}(s)$,
for each $s\in \mathbb{C}_\omega$ such that $\mathtt{m}(s)<0$;
\item $\frac{1}{(\mathtt{h}-s+2i)^{k}}$, 
where $i\in\mathbb{Z}_{>0}$ and 
\begin{displaymath}
1\leq k\leq \mathtt{m}(s)-
|\{j\in\{1,2\}\,:\, r_j\in\{s-4,s-6,\dots,s-2i\}\}|. 
\end{displaymath}
for each $s\in \mathbb{C}_\omega$ such that $\mathtt{m}(s)>0$.
\end{itemize}

\begin{theorem}\label{thm-main}
We have $K_u=L_u$.
\end{theorem}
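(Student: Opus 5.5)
The plan is to prove the two inclusions $L_u\subseteq K_u$ and $K_u\subseteq L_u$ separately, using two facts established above. First, $L_u$ is the unique simple submodule of $\mathrm{Res}^{R}_{U_\vartheta}(N_u)$, so it lies in every non-zero $U_\vartheta$-submodule of $N_u$. Second, by Proposition~\ref{prop-s1.5-1}, $\mathbb{C}[\mathtt{h}]\subset L_u$, so $L_u=U_\vartheta\cdot\mathbb{C}[\mathtt{h}]$. Throughout I will work in the basis $\mathbf{B}$ of $\mathbb{C}(\mathtt{h})$ and track only the \emph{pole data} of a rational function: the order of its pole at each point of $\mathbb{C}$. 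By construction, $K_u$ is exactly the set of rational functions whose poles lie in the prescribed sets $s+2\mathbb{Z}_{\geq 0}$ (for $\mathtt{m}(s)<0$) and $s-2\mathbb{Z}_{>0}$ (for $\mathtt{m}(s)>0$), with orders bounded as stated. In particular, $K_u$ is a $\mathbb{C}[\mathtt{h}]$-submodule of $\mathbb{C}(\mathtt{h})$.

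\emph{Step 1: $K_u$ is a $U_\vartheta$-submodule, hence $L_u\subseteq K_u$.} Stability under $\mathtt{h}$ is immediate from the pole description. For $\mathtt{e}$, I use $\mathtt{e}\cdot b=c\,b(\mathtt{h}-2)\alpha(\mathtt{h})/\beta(\mathtt{h})$. The substitution shifts every pole of $b$ by $+2$. Division by $\beta$ adds poles of order $-\mathtt{m}(s)$ at the points $s$ with $\mathtt{m}(s)<0$, which is allowed. Multiplication by $\alpha$ removes a pole of order up to $\mathtt{m}(s)$ at $s$, and this is exactly what happens to a pole at $s-2$ of $b$ after the shift. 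So poles on the chains $s-2\mathbb{Z}_{>0}$ move right by $2$ but stay inside the permitted bounds; the count term only decreases as one moves right. For $\mathtt{f}$, I use
\begin{displaymath}
\mathtt{f}\cdot b=-\tfrac{1}{4c}(\mathtt{h}-r_1)(\mathtt{h}-r_2)\,b(\mathtt{h}+2)\beta(\mathtt{h}+2)/\alpha(\mathtt{h}+2).
\end{displaymath}
Here poles shift by $-2$, and $\beta(\mathtt{h}+2)$ cancels a pole at $s-2$ when $\mathtt{m}(s)<0$, which is precisely the pole coming from $s$. Division by $\alpha(\mathtt{h}+2)$ creates poles of order $\mathtt{m}(s)$ at $s-2$. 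The factors $\mathtt{h}-r_j$ lower the pole order by one exactly when a pole is moved onto $r_j$, and this reproduces the term $|\{j: r_j\in\{s-4,\dots,s-2i\}\}|$ (with multiplicities $m_j$ handling $r_1=r_2$). The choice of $\omega$ guarantees that the various chains do not interact badly and that $r_j\neq s-2$. Hence $K_u$ is a non-zero submodule, and so it contains the socle $L_u$.

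\emph{Step 2: $K_u\subseteq L_u=U_\vartheta\cdot\mathbb{C}[\mathtt{h}]$.} I will generate each basis element of $K_u$ by induction along the chains. Applying $\mathtt{e}$ to suitable polynomials produces $v(\mathtt{h}-2)\alpha/\beta$. Multiplying by polynomials in $\mathtt{h}$ and using partial fractions, this isolates each $\frac{1}{(\mathtt{h}-s)^k}$ with $k\leq-\mathtt{m}(s)$. Applying $\mathtt{e}$ repeatedly then pushes these to $s+2i$, where the factor $\alpha$ has no zeros on these chains and so does not reduce the order. Symmetrically, applying $\mathtt{f}$ to polynomials gives poles of full order $\mathtt{m}(s)$ at $s-2$, and iterating $\mathtt{f}$ transports them to $s-2i$. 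Each passage through $r_j$ costs exactly one order, because of the factor $(\mathtt{h}-r_j)$. At every stage I clear unwanted poles and reach lower orders by multiplying by appropriate polynomials in $\mathtt{h}$, which is legitimate since $L_u$ is a $\mathbb{C}[\mathtt{h}]$-module.

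The main obstacle I expect is in Step 2: showing that the \emph{maximal} allowed pole order is actually attained at every point. Several $s\in\mathbb{C}_\omega$ could lie in the same $2\mathbb{Z}$-orbit only if they coincide, so there is no overlap issue from that source. However, poles coming from $\mathtt{m}(s)>0$ and $\mathtt{m}(s)<0$ chains, together with the zeros at $r_j$, must be shown not to cancel the leading pole term. My first attempt will be a leading-coefficient argument: the pole of top order at $s-2i$ produced by $\mathtt{f}^i\cdot p$, for a polynomial $p$ chosen non-vanishing at the relevant shifted points, has non-zero coefficient. I will treat the case $r_1=r_2$ (where $\vartheta=0$) separately.
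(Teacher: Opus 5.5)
Your proposal is correct and follows essentially the paper's argument. Using $\mathbb{C}[\mathtt{h}]\subset L_u$ (so $L_u=U_\vartheta\cdot 1$), you show $U_\vartheta\mathbb{C}[\mathtt{h}]=K_u$ by tracking pole orders under $\mathtt{e}$, $\mathtt{f}$ and multiplication by polynomials, with each passage of an $\mathtt{f}$-chain through $r_j$ costing one order. Your Step~1 only makes explicit the closure of $K_u$ that the paper indicates through its ``does not produce'' remarks, and it gives $L_u\subseteq K_u$ directly from $1\in K_u$, with no essential-socle claim needed. The leading-coefficient worry in Step~2 is harmless: multiplying $\mathtt{f}\cdot\frac{1}{(\mathtt{h}-s+2i)^{k}}$ by the polynomial $\alpha(\mathtt{h}+2)$ leaves a single pole at $s-2i-2$ with non-zero leading coefficient, because $\beta(s-2i)\neq 0$ and $(\mathtt{h}-r_1)(\mathtt{h}-r_2)$ vanishes there to exactly the order subtracted in the definition of $K_u$.
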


\begin{proof}
We know from Proposition~\ref{prop-s1.5-1} that 
$\mathbb{C}[\mathtt{h}]\subset L_u$. Therefore we only
need to prove that $U_\vartheta\mathbb{C}[\mathtt{h}]=K_u$.
For simplicity, in the remainder of the proof we will 
talk about elements in $N_u$ only up to non-zero scalars.

Take $s\in \mathbb{C}_\omega$ such that $\mathtt{m}(s)<0$.
Applying $\mathtt{e}$ to $1$ gives $\frac{\alpha(\mathtt{h})}{\beta(\mathtt{h})}$
which has $(\mathtt{h}-s)^{\mathtt{m}(s)}$ as a factor. Note that the numerator
is coprime with $\mathtt{h}-s$, hence, writing 
$\frac{\alpha(\mathtt{h})}{\beta(\mathtt{h})}$ with respect to the basis
$\mathbf{B}$ gives $\frac{1}{(\mathtt{h}-s)^{-\mathtt{m}(s)}}$ with some
non-zero coefficient. Applying $\frac{\beta(\mathtt{h})}{(\mathtt{h}-s)}$
gives therefore $\frac{1}{(\mathtt{h}-s)}$ plus a polynomial in $\mathtt{h}$.
Hence $\frac{1}{(\mathtt{h}-s)}\in L_u$. In case $-\mathtt{m}(s)\geq 2$,
applying, if necessary, $\frac{\beta(\mathtt{h})}{(\mathtt{h}-s)^2}$
to $\frac{\alpha(\mathtt{h})}{\beta(\mathtt{h})}$ and proceeding inductively
we get $\frac{1}{(\mathtt{h}-s)^{k}}\in L_u$, for all $k=1,2,\dots,-\mathtt{m}(s)$.
Applying to these elements $\mathtt{e}$ repeatedly and using arguments
similar to the above, we get 
$\frac{1}{(\mathtt{h}-s-2j)^{k}}\in L_u$, for all $k=1,2,\dots,-\mathtt{m}(s)$
and all $j\in\mathbb{Z}_{\geq 0}$.

Note that the application of $\mathtt{f}$ to $\frac{1}{(\mathtt{h}-s)^{k}}$, 
where $k\in\{1,2,\dots,-\mathtt{m}(s)\}$, does not produce any denominators
of the form $\frac{1}{(\mathtt{h}-s+2)^{l}}$ as these cancel with
the factor $(\mathtt{h}-s+2)^{-\mathtt{m}(s)}$ of 
$\beta(\mathtt{h}+2)$ in the numerator.

Next, take $s\in \mathbb{C}_\omega\setminus\{t_1,t_2\}$ such that $\mathtt{m}(s)>0$.
Swapping the roles of $\mathtt{e}$ and $\mathtt{f}$ in the arguments above,
one shows that $\frac{1}{(\mathtt{h}-s+2i)^{k}}\in L_u$, 
where $i\in\mathbb{Z}_{>0}$ and $1\leq k\leq \mathtt{m}(s)$.
Here the point is that the factor $\vartheta-(\mathtt{h}+1)^2$
in the numerator of the action of $\mathtt{f}$ remains coprime with the
denominators and hence does not affect the argument. Also, we cannot 
obtain any $\frac{1}{(\mathtt{h}-s)^{l}}$ from 
$\frac{1}{(\mathtt{h}-s+2)^{k}}$ using $\mathtt{e}$ due to the
$(\mathtt{h}-s)^{\mathtt{m}(s)}$ factor in $\alpha(\mathtt{h})$. 
Finally, note the restriction $i\neq 0$ above, in contrast to the previous paragraph,
due to $\sigma^{-1}(\beta)$ in the formula for $\mathtt{f}$.

It remains to consider the case $s=t_j$, for some $j\in\{1,2\}$, and
$\mathtt{m}(s)>0$. Assume first that $t_1\neq t_2$.
If we try the approach in the previous paragraph,
the first step works just fine (due to our choice of $\omega$), 
however, when trying to apply 
$\mathtt{f}$ repeatedly, we eventually will come to a situation
when the factors of $\vartheta-(\mathtt{h}+1)^2$ in the numerator of
action of $\mathtt{f}$ will cancel some factors in
the denominator  given by $\alpha(\mathtt{h}-s+2j)$. The corresponding
step is given by $n_j$. The number of
factors in the numerator is $m_j=1$ and in the denominator it is
$\mathtt{m}(s)$. So, if $m_j=1\geq \mathtt{m}(s)$, we will not be able
to create any $\frac{1}{(\mathtt{h}-s+2n_j)^{l}}$ and thus must stop.
If $m_j=1<\mathtt{m}(s)$, we will be able
to create elements of the form $\frac{1}{(\mathtt{h}-s+2n_j)^{\mathtt{m}(s)-m_j}}$
and then proceed as in the previous paragraph.

Finally, let us consider the case $s=t_1=t_2$. Here we can have
$r_1=r_2=-1$ or $r_1-r_2\in2\mathbb{Z}_{>0}$. If
$r_1=r_2=-1$, then $t_1=t_2=3$. The first step of the above
approach works just fine, but at the second step we have
$(\mathtt{h}+1)^2$ in the numerator and hence we can only create
the exponent $\mathtt{m}(s)-2$ (if it is positive). The rest
is similar to the previous paragraph.

If $r_1-r_2\in2\mathbb{Z}_{>0}$, our maximal possible exponent 
that we can obtain will have
to decrease by $1$ twice: first at step $n_1$ and then at 
step $n_2$. The rest is similar to the above.
\end{proof}

We note that our choice of $\omega$ guarantees that the whole of 
$\mathbb{C}[h]$  belongs to $L_u$, simplifying some arguments in the
proof of the previous theorem. For the general choice of 
$\omega$, one could instead consider the $\mathfrak{sl}_2$ submodule
of $R$ generated by $\mathbb C [h]$ and then look for the simple 
$\mathfrak{sl}_2$-socle of that module.

\begin{example}\label{ex-11}
Let $\vartheta=0$ and $u=\mathtt{h}-2$. Then 
$\vartheta-(\mathtt{h}+1)^2=-(\mathtt{h}+1)^2$ has root $-1$ of multiplicity $2$.
Note that $2-(-1)\not\in2\mathbb{Z}$.

Applying $\mathtt{f}$ to $1$ gives $\frac{-\frac{1}{4}(\mathtt{h}+1)^2}{\mathtt{h}}$ and,
since the numerator and the denominator are coprime, we obtain
$\frac{1}{\mathtt{h}}\in L_u$. Applying $\mathtt{f}$ again and again,
we get $\frac{1}{\mathtt{h}+2i}\in L_u$, for all $i\geq 0$.

Applying $\mathtt{e}$ to $\frac{1}{\mathtt{h}}$ gives $1$
due to cancellation of $\mathtt{h}-2$,
so the linear span of $\mathbb{C}[\mathtt{h}]$
together with all  $\frac{1}{\mathtt{h}+2i}$, for  $i\geq 0$
is $\mathfrak{g}$-invariant. Hence this linear span is our $L_u$.
\end{example}

\begin{example}\label{ex-12}
Let $\vartheta=0$ and $u=\mathtt{h}-3$. Then 
$\vartheta-(\mathtt{h}+1)^2=-(\mathtt{h}+1)^2$ has root $-1$ of multiplicity $2$.
Note that $3-(-1)=2\cdot 2$.

Applying $\mathtt{f}$ to $1$ gives $\frac{-\frac{1}{4}(\mathtt{h}+1)^2}{\mathtt{h}-1}$ and,
since the numerator and the denominator are coprime, we obtain
$\frac{1}{\mathtt{h}-1}\in L_u$. Applying $\mathtt{f}$ again gives
$\frac{-\frac{1}{4}(\mathtt{h}+1)}{\mathtt{h}-1}$,
which is a linear combination of $1$ and $\frac{1}{\mathtt{h}-1}$. 
Therefore our $L_u$ is spanned by $\mathbb{C}[\mathtt{h}]$
and $\frac{1}{\mathtt{h}-1}$. Note that, in this case, $L_u$ is free over
$\mathbb{C}[\mathtt{h}]$ of rank one with basis $\frac{1}{\mathtt{h}-1}$.
\end{example}

\begin{example}\label{ex-13}
Let $\vartheta=0$ and $u=(\mathtt{h}-3)^3$. Then 
$\vartheta-(\mathtt{h}+1)^2=-(\mathtt{h}+1)^2$ has root $-1$ of multiplicity $2$.
Note that $3-(-1)=2\cdot 2$.

Applying $\mathtt{f}$ to $1$ gives $\frac{-\frac{1}{4}(\mathtt{h}+1)^2}{(\mathtt{h}-1)^3}$ and,
since the numerator and the denominator are coprime, we obtain
$\frac{1}{(\mathtt{h}-1)^3},\frac{1}{(\mathtt{h}-1)^2},
\frac{1}{\mathtt{h}-1}\in L_u$. Applying $\mathtt{f}$ 
to $\frac{1}{(\mathtt{h}-1)^3}$, we obtain 
\begin{displaymath}
\frac{-\frac{1}{4}(\mathtt{h}+1)^2}{(\mathtt{h}+1)^3(\mathtt{h}-1)^3}=
-\frac{1}{4(\mathtt{h}+1)(\mathtt{h}-1)^3}.
\end{displaymath}
From here, applying $-4(\mathtt{h}-1)^3$ gives $\frac{1}{\mathtt{h}+1}\in L_u$.
Proceeding similarly, we obtain that
$\frac{1}{\mathtt{h}+1+2i}\in L_u$, for all $i\in\mathbb{Z}_{\geq 0}$.
Therefore our $L_u$ is spanned by $\mathbb{C}[\mathtt{h}]$,
$\frac{1}{(\mathtt{h}-1)^3}$,
$\frac{1}{(\mathtt{h}-1)^2}$
and all $\frac{1}{\mathtt{h}-1+2i}$, for $i\in\mathbb{Z}_{\geq 0}$.
\end{example}

\begin{corollary}\label{cor}
The module $L_u$ is finitely generated over $\mathbb{C}[\mathtt{h}]$
if and only if, for any $s\in\mathbb{C}_\omega$, we have
$0\leq \mathtt{m}(s)\leq |\{j\in\{1,2\}\,:\, r_j\in s-2\mathbb{Z}_{>0}\}|$.
\end{corollary}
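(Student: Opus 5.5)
Since $L_u=K_u$ by Theorem~\ref{thm-main}, the plan is to read off finite generation directly from the spanning set of $K_u$. First I note that $K_u$ is a $\mathbb{C}[\mathtt{h}]$-submodule of $\mathbb{C}(\mathtt{h})$ containing $\mathbb{C}[\mathtt{h}]$. Moreover, the spanning elements other than $\mathbb{C}[\mathtt{h}]$ are partial fractions in the basis $\mathbf{B}$, so they are linearly independent modulo $\mathbb{C}[\mathtt{h}]$. Hence $K_u/\mathbb{C}[\mathtt{h}]$ has a basis consisting of these fractions. A finitely generated $\mathbb{C}[\mathtt{h}]$-submodule of $\mathbb{C}(\mathtt{h})$ has a common denominator, so it involves only finitely many poles. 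Conversely, a submodule containing $\mathbb{C}[\mathtt{h}]$ with only finitely many fraction basis elements is generated by $1$ together with those fractions. Thus $L_u$ is finitely generated if and only if the list of fractions in the definition of $K_u$ is finite.

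Next I count this list, point by point. If $\mathtt{m}(s)<0$ for some $s$, the second bullet contributes $\frac{1}{\mathtt{h}-s-2i}$ for all $i\geq 0$, which is infinitely many fractions; so finite generation forces $\mathtt{m}\geq 0$. For $s$ with $\mathtt{m}(s)>0$, the third bullet contributes for each $i>0$ exactly
\begin{displaymath}
\max\bigl(0,\ \mathtt{m}(s)-|\{j\,:\, r_j\in\{s-4,\dots,s-2i\}\}|\bigr)
\end{displaymath}
fractions. This quantity is non-increasing in $i$ and stabilises at $\max(0,\mathtt{m}(s)-|\{j: r_j\in s-2\mathbb{Z}_{>0}\}|)$. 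The contribution is finite precisely when this stable value is $0$, i.e.\ when $\mathtt{m}(s)\leq|\{j: r_j\in s-2\mathbb{Z}_{>0}\}|$. Here I use that, by the choice of $\omega$, no $r_j$ equals $s-2$, so the sets $\{s-4,\dots\}$ and $s-2\mathbb{Z}_{>0}$ capture the same $r_j$. Since $\mathtt{m}$ has finite support, combining over all $s$ gives the stated criterion.

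The main obstacle is the first step: justifying that finite generation is equivalent to finiteness of the fraction list. This needs the linear independence of distinct basis fractions modulo polynomials, together with the observation that any finite set of elements of $K_u$ has a common denominator $d$. Then every element of the submodule they generate lies in $\frac{1}{d}\mathbb{C}[\mathtt{h}]$, which contains only finitely many of the basis fractions. If $K_u$ contains infinitely many distinct basis fractions, no such $d$ can work, so $K_u$ is not finitely generated.
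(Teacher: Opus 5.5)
Your proposal is correct and follows essentially the paper's argument. The paper also reduces, via Theorem~\ref{thm-main}, to whether the list of genuine fractions spanning $K_u$ is finite, and then reads the condition off the definition. You supply the details the paper leaves implicit: the common-denominator argument for this equivalence, and the observation that the choice of $\omega$ rules out $r_j=s-2$, so the stabilised count in the third bullet equals $|\{j\,:\,r_j\in s-2\mathbb{Z}_{>0}\}|$.
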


\begin{proof}
The module $L_u$ is finitely generated over $\mathbb{C}[\mathtt{h}]$
if and only if the number of genuine fractions in the description of
$L_u$ is finite. Directly from the definitions it follows that
this is equivalent to the
condition of the corollary.
\end{proof}

Note that being both torsion-free and finitely generated 
over $\mathbb{C}[\mathtt{h}]$ is  equivalent to being free of finite rank.
Therefore Corollary~\ref{cor} classifies all 
simple $\mathfrak{sl}_2$-module that are free of rank one over
$\mathbb{C}[\mathtt{h}]$.

\section{Application to the first Weyl algebra}\label{s3}

\subsection{General setup}\label{s3.1}

In this section, we work over $\mathbb{C}$
(alternatively, an algebraically closed field of characteristic $0$). 
Recall that the {\em first Weyl algebra} $\mathbf{A}_1$
is given by the presentation $\mathbf{A}_1:=\langle a,b\,:\, ab-ba=1\rangle$.
The algebra $\mathbf{A}_1$ is a simple algebra.

Consider the same field $\Bbbk:=\mathbb{C}(\mathtt{h})$ of rational functions 
as in Section~\ref{s2} and the same  automorphism $\sigma$ 
of this field given by the identity on $\mathbb{C}$ and,
additionally, by sending $\mathtt{h}$ to $\mathtt{h}-2$. 
We can now consider the corresponding skew Laurent polynomial
algebra $R:=\Bbbk[x,x^{-1},\sigma]$.

Sending $a$ to $x$ and $b$ to $-\frac{\mathtt{h}}{2}x^{-1}$
defines an embedding $\Psi$ of $\mathbf{A}_1$ into $R$.
Hence any $R$-module can be considered as an $\mathbf{A}_1$-module
by pulling back via $\Psi$.

\subsection{Simple torsion free $\mathbf{A}_1$-modules of rank $1$}\label{s3.2}

By Corollary~\ref{cor8}, the isomorphism classes of 
simple $R$-module structures on $\mathbb{C}(x)$ are in bijection 
with pairs $(c,\mathtt{m})$, where $c\in\mathbb{C}^{\times}$ and
$\mathtt{m}\in \mathrm{Fsfun}(\mathbb{C}_0,\mathbb{Z})$
(note that we take $\omega=0$).

For $u$ given by Equation~\ref{eq11a}, we have the corresponding
simple $R$-module $N_u$ of rank $1$. We denote by $F_u$
the simple socle of $N_u$, when considered as an $\mathbf{A}_1$-module.
Consequently, the set of all these $F_u$, for $u$ given by Equation~\ref{eq11a}, 
is a complete and irredundant set of representatives of the isomorphism 
classes of simple $\mathbf{A}_1$-modules which are torsion free of rank one
over the subalgebra $\mathbb{C}[ab]$. We can now present an
explicit description of all such $F_u$. Recall that $\mathtt{h}=-2ba$.

\begin{lemma}\label{kem-3-31}
For all $u$ as above, we have $\mathbb{C}[\mathtt{h}]\subset F_u$. 
\end{lemma}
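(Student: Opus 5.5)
We follow the proof of Proposition~\ref{prop-s1.5-1} verbatim, with $\vartheta-(\mathtt{h}+1)^2$ replaced by $-\frac{\mathtt{h}}{2}$ and $\omega$ set to $0$. The subspace $F_u\cap\mathbb{C}[\mathtt{h}]$ is an ideal of $\mathbb{C}[\mathtt{h}]$. It is nonzero, because $F_u$ is a nonzero $\mathbb{C}[\mathtt{h}]$-submodule of $\mathbb{C}(\mathtt{h})$ (note that $\mathtt{h}=-2ba\in\mathbf{A}_1$). Let $g(\mathtt{h})$ be its monic generator. We aim to show $g=1$.

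Write $u=c\frac{\alpha(\mathtt{h})}{\beta(\mathtt{h})}$ as in \eqref{eq11bc}, where all roots of $\alpha$ and $\beta$ lie in $\mathbb{C}_0$, i.e., have real part in $[0,2)$.

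\textbf{Step 1: applying $a$.} On $N_u$, the element $a$ acts as $x$, so $\beta(\mathtt{h})a\cdot v(\mathtt{h})=c\,v(\mathtt{h}-2)\alpha(\mathtt{h})$. Iterating this and choosing $i\gg0$ with $\gcd(g(\mathtt{h}-2i),g(\mathtt{h}))=1$ gives
\begin{displaymath}
g(\mathtt{h}) \,\Big|\, \alpha(\mathtt{h})\alpha(\mathtt{h}-2)\cdots\alpha(\mathtt{h}-2(i-1)).
\end{displaymath}
Hence every root of $g$ lies in $\{t\in\mathbb{C}_0:\mathtt{m}(t)>0\}+2\mathbb{Z}_{\geq0}$. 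In particular, every root of $g$ has real part $\geq 0$.

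\textbf{Step 2: applying $b$.} The element $b$ acts as $-\frac{\mathtt{h}}{2}x^{-1}$, so
\begin{displaymath}
b\cdot v=-\frac{\mathtt{h}}{2}\,\frac{v(\mathtt{h}+2)}{u(\mathtt{h}+2)}.
\end{displaymath}
Therefore $\alpha(\mathtt{h}+2)b\cdot v=-\frac{1}{2c}\,\mathtt{h}\,v(\mathtt{h}+2)\beta(\mathtt{h}+2)$, which is again a polynomial. Iterating and again using $\gcd(g(\mathtt{h}+2i),g(\mathtt{h}))=1$ for $i\gg0$, we find that every root of $g$ lies in
\begin{displaymath}
\bigl(\{0\}\cup\{t\in\mathbb{C}_0:\mathtt{m}(t)<0\}\bigr)-2\mathbb{Z}_{\geq0}.
\end{displaymath}
In particular, every root of $g$ has real part $<2$, and a root with real part in $[0,2)$ can only be $0$ or some $t\in\mathbb{C}_0$ with $\mathtt{m}(t)<0$.

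\textbf{Step 3: disjointness.} This is the main obstacle. With $\omega=0$, the root $0$ of the factor $\mathtt{h}$ lies inside the strip $\mathbb{C}_0$ rather than far to its left, unlike in the $\mathfrak{sl}_2$ case. A common root $r$ of $g$ would have to satisfy both Step 1 and Step 2, so it must have real part in $[0,2)$. By Step 1, $r$ then equals some $t$ with $\mathtt{m}(t)>0$. By Step 2, $r$ equals $0$ or some $t'$ with $\mathtt{m}(t')<0$.

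The second alternative is impossible, since $\mathtt{m}(t)$ cannot be both positive and negative. The remaining case is $r=0$ with $\mathtt{m}(0)>0$. Here the divisibility alone does not exclude the root, so we must examine this case directly.

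\textbf{Step 4: the case $r=0$, $\mathtt{m}(0)>0$.} Consider the factor $\mathtt{h}$ that $b$ introduces. The only place $(\mathtt{h}-0)$ can be produced in Step 1 is at the first step, from $\alpha(\mathtt{h})$ itself. The exponents are then compared as follows.
\begin{itemize}
\item In Step 1, take $i$ large, apply $a$ once to elements of the ideal, and track the multiplicity of the factor $\mathtt{h}$.
\item In Step 2, track the multiplicity of $\mathtt{h}$ in $\mathtt{h}\,g(\mathtt{h}+2)\beta(\mathtt{h}+2)$. Since $g(\mathtt{h}+2)$ is coprime to $\mathtt{h}$ (its roots have real part $\geq -2$, and $2$ is not a root of $g$ because $\Re<2$), and $\beta(\mathtt{h}+2)$ has no root at $0$ (as $\mathtt{m}(2)$ is not defined on $\mathbb{C}_0$), this multiplicity is exactly $1$.
\end{itemize}
So Step 2 gives $g(\mathtt{h})\mid \mathtt{h}\cdot(\text{unit at }0)$, i.e., $0$ is at most a simple root of $g$.

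To eliminate even this simple root, use Step 2 in the opposite direction. Take $p=g$, so that $b\cdot g=-\frac{1}{2c}\,\mathtt{h}\,\frac{g(\mathtt{h}+2)\beta(\mathtt{h}+2)}{\alpha(\mathtt{h}+2)}\in F_u$. Multiplying by the polynomial $\alpha(\mathtt{h}+2)$ gives $\mathtt{h}\,g(\mathtt{h}+2)\beta(\mathtt{h}+2)\in F_u\cap\mathbb{C}[\mathtt{h}]$, and $\alpha(\mathtt{h}+2)$ is coprime to $\mathtt{h}$. Combined with the element $\frac{g(\mathtt{h})}{\mathtt{h}}\cdot(\text{stuff})$, this lets us divide out the root $0$, contradicting minimality.

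If this bookkeeping fails, there is a cleaner fallback: first produce $1\in F_u$ directly, by applying $a$ to $\frac{g}{\mathtt{h}}$-type elements obtained via $b$, or invoke the freedom to choose $\omega$ so that $0$ is avoided. The intended conclusion in all cases is $g=1$, i.e., $\mathbb{C}[\mathtt{h}]\subset F_u$.
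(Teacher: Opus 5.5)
Your Steps 1--3 are correct, and they are exactly what the paper's one-line ``mutatis mutandis'' proof amounts to: they give $g=1$ whenever $\mathtt{m}(0)\le 0$. You also identified the right obstruction. With $\omega=0$, the root $0$ of the factor $-\frac{\mathtt{h}}{2}$ lies inside $\mathbb{C}_0$, so disjointness fails exactly when $\mathtt{m}(0)>0$. Your multiplicity count in that case correctly gives $g\mid\mathtt{h}$.

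The gap is the next step. ``Dividing out the root $0$'' is not an argument, since no element like $g/\mathtt{h}$ times something is known to lie in $F_u$. It cannot be made into one, because in this case the statement is false. Take $u=\mathtt{h}$. Then $a\cdot v=\mathtt{h}\,v(\mathtt{h}-2)$ and $b\cdot v=-\frac{\mathtt{h}}{2}\cdot\frac{v(\mathtt{h}+2)}{\mathtt{h}+2}$. Hence $b\cdot(\mathtt{h}p)=-\frac{\mathtt{h}}{2}\,p(\mathtt{h}+2)$, and $\mathtt{h}\,\mathbb{C}[\mathtt{h}]$ is an $\mathbf{A}_1$-submodule of $N_u$. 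Any two nonzero $\mathbb{C}[\mathtt{h}]$-submodules of $\mathbb{C}(\mathtt{h})$ intersect nontrivially, so the simple module $F_u$ is contained in (in fact equals) $\mathtt{h}\,\mathbb{C}[\mathtt{h}]$. Thus $1\notin F_u$ and $g=\mathtt{h}$. Note also that $b\cdot 1=-\frac{\mathtt{h}}{2(\mathtt{h}+2)}$, so the space $D_u=\mathbb{C}[\mathtt{h}]$ that the paper assigns to this $u$ is not even $b$-stable.

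The same happens for every $u$ with $\mathtt{m}(0)=m>0$. Write $\mathrm{ord}_z$ for the order of vanishing at $z$, negative for poles. The set of $w\in\mathbb{C}(\mathtt{h})$ with $\mathrm{ord}_0w\ge 1$, $\mathrm{ord}_{2k}w\ge 0$ for $k>0$, and $\mathrm{ord}_{2k}w\ge 1-m$ for $k<0$ is an $\mathbf{A}_1$-submodule of $N_u$ that does not contain $1$. Together with your bound $g\mid\mathtt{h}$, this gives $F_u\cap\mathbb{C}[\mathtt{h}]=\mathtt{h}\,\mathbb{C}[\mathtt{h}]$.

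Your fallbacks do not help either. As a subset of $\mathbb{C}(\mathtt{h})$, $F_u$ depends on the representative $u$, not only on its class modulo $G_\sigma$. The isomorphism $N_u\to N_{u'}$ for $u'=\frac{r}{\sigma(r)}u$ is multiplication by $r$, which does not preserve $\mathbb{C}[\mathtt{h}]$. For example, moving the strip so that it contains $2$ replaces $u=\mathtt{h}$ by $u'=\mathtt{h}-2$. There $b$ acts by $v\mapsto-\frac12 v(\mathtt{h}+2)$, and indeed $F_{u'}=\mathbb{C}[\mathtt{h}]$. But pulling back along multiplication by $r=\mathtt{h}^{-1}$ gives $F_u=\mathtt{h}\,\mathbb{C}[\mathtt{h}]$ again.

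So what is actually true is that $\mathbb{C}[\mathtt{h}]\subset F_u$ if and only if $\mathtt{m}(0)\le 0$; your Steps 1--3 prove the ``if'' direction. The paper's ``mutatis mutandis'' proof silently skips exactly the case you flagged. In that case the lemma, and the $\mathtt{m}(0)>0$ part of the description of $D_u$, need to be corrected rather than proved.
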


\begin{proof}
Mutatis mutandis the proof of Proposition~\ref{prop-s1.5-1}.
\end{proof}

Let $D_u$ denote the subspace of $N_u$ spanned by the following elements
\begin{itemize}
\item $\mathbb{C}[\mathtt{h}]$;
\item $\frac{1}{(\mathtt{h}-s-2i)^{k}}$, 
where $i\in\mathbb{Z}_{\geq 0}$ and $1\leq k\leq -\mathtt{m}(s)$,
for each $s\in \mathbb{C}_0$ such that $\mathtt{m}(s)<0$;
\item $\frac{1}{(\mathtt{h}-s+2i)^{k}}$, 
where $i\in\mathbb{Z}_{>0}$ and $1\leq k\leq \mathtt{m}(s)$,
for each $s\in \mathbb{C}_0\setminus\{0\}$ such that $\mathtt{m}(s)>0$.
\item $\frac{1}{(\mathtt{h}+2i)^{k}}$, 
where $i\in\mathbb{Z}_{>0}$ and 
$1\leq k\leq \mathtt{m}(0)-1$,  if $\mathtt{m}(0)>0$.
\end{itemize}

\begin{theorem}\label{thm-main-2}
We have $D_u=F_u$. Furthermore, the module $D_u$, for $u$ as above,
for a complete and irredundant list of representatives of the isomorphism
classes of simple $\mathbf{A}_1$-modules that are torsion-free or rank
one over the subalgebra $\mathbb{C}[ab]$.
\end{theorem}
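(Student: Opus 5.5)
Since the second assertion is just the classification recalled right before Lemma~\ref{kem-3-31} combined with the first assertion, the proof reduces to showing $D_u=F_u$. By Lemma~\ref{kem-3-31}, $\mathbb{C}[\mathtt{h}]\subset F_u$, and $F_u$ is the unique simple submodule of $N_u$. It therefore suffices to prove that $\mathbf{A}_1\cdot\mathbb{C}[\mathtt{h}]=D_u$. Inclusion $\supseteq$ shows that every listed element is generated. Inclusion $\subseteq$ shows that $D_u$ is stable under $a$ and $b$. We follow the proof of Theorem~\ref{thm-main} line by line, with $a$ in the role of $\mathtt{e}$ and $b$ in the role of $\mathtt{f}$. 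Explicitly,
\begin{displaymath}
a\cdot v=\sigma(v)u,\qquad b\cdot v=-\frac{\mathtt{h}}{2}\cdot\frac{\sigma^{-1}(v)}{\sigma^{-1}(u)} .
\end{displaymath}
So the quadratic factor $\vartheta-(\mathtt{h}+1)^2$ is replaced by the linear factor $-\mathtt{h}/2$, whose only root is $r=0$, of multiplicity one. With $\omega=0$ we have $0\in\mathbb{C}_0$. The relevant ``resonant'' point is then $s=0$ itself, rather than a point $t_j$ far to the right of the roots.

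First take $s$ with $\mathtt{m}(s)<0$. Write $u=c\alpha/\beta$ as in \eqref{eq11bc}. Applying $a$ to $1$ and multiplying by suitable divisors of $\beta(\mathtt{h})$ isolates the principal parts $(\mathtt{h}-s)^{-k}$ for $1\le k\le -\mathtt{m}(s)$. Repeated application of $a$ then shifts these to $(\mathtt{h}-s-2i)^{-k}$ for $i\ge0$, exactly as in Theorem~\ref{thm-main}.

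Next take $s\neq 0$ with $\mathtt{m}(s)>0$. Applying $b$ gives denominators $\alpha(\mathtt{h}+2)$ and their further shifts, which yields $(\mathtt{h}-s+2i)^{-k}$ for $i>0$ and $1\le k\le\mathtt{m}(s)$. The numerator $\mathtt{h}$ never cancels these denominators, because $s-2i\neq 0$: indeed $0\in\mathbb{C}_0$ while $s-2i\notin\mathbb{C}_0$ for $i>0$.

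For $s=0$ with $\mathtt{m}(0)>0$, the factor $\mathtt{h}$ in the action of $b$ cancels one power of $(\mathtt{h}+2)$ already at the first step, $i=1$. Hence only exponents up to $\mathtt{m}(0)-1$ appear from then on. This is the analogue of the resonant case with $n_j=1$ and $m_j=1$, and it explains the last bullet in the definition of $D_u$.

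For stability of $D_u$, we check on each basis element that $a$ and $b$ produce only allowed fractions plus polynomials. There are three points. The factor $(\mathtt{h}-s+2)^{-\mathtt{m}(s)}$ of $\sigma^{-1}(\beta)$ kills the poles at $s-2$ for negative $\mathtt{m}(s)$. The factor $(\mathtt{h}-s)^{\mathtt{m}(s)}$ in $\alpha$ prevents $a$ from creating poles at $s$ for positive $\mathtt{m}(s)$. Finally, the factor $\mathtt{h}$ bounds the exponents at $-2i$ by $\mathtt{m}(0)-1$: when $b$ acts on $(\mathtt{h}+2(i-1))^{-k}$, it produces at most $(\mathtt{h}+2i)^{-(k+\mathtt{m}(0))}$ times $\mathtt{h}$, and one has to track that the order of the pole at $-2i$ stays within the bound. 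This exponent bookkeeping at $s=0$ is the main obstacle. I would handle it by computing the pole order at each point $-2i$ of $b\cdot(\mathtt{h}+2j)^{-k}$ directly, and by showing that the principal parts obtained for $i\ge1$ are exactly those of order at most $\mathtt{m}(0)-1$.

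Assume $D_u$ is $\mathbf{A}_1$-stable and generated by $\mathbb{C}[\mathtt{h}]$. Then $D_u=\mathbf{A}_1\mathbb{C}[\mathtt{h}]\subseteq F_u$. Since $\mathbb{C}[\mathtt{h}]\subset F_u$ and $D_u$ is a submodule, we also get $F_u\subseteq D_u$ by minimality. So $D_u=F_u$, and the irredundancy and completeness follow from Corollary~\ref{cor8} together with the correspondence between simple rank-one $R$-modules and their simple $\mathbf{A}_1$-socles.
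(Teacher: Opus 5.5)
Your plan takes the same route as the paper, whose proof is just ``mutatis mutandis'' Theorem~\ref{thm-main}. However, the step you single out as the crux, namely $s=0$ with $\mathtt{m}(0)>0$, is wrong and cannot be repaired.

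\textbf{The cancellation at $s=0$ does not happen.} The factor $\mathtt{h}$ in $b\cdot v=-\frac{\mathtt{h}}{2}\,\sigma^{-1}(v)/\sigma^{-1}(u)$ vanishes at $0$, not at $-2$, so it never cancels a power of $\mathtt{h}+2$. Your own argument for $s\neq 0$ applies verbatim here: the poles that $b$ produces from $s=0$ sit at $-2i$ with $i\geq 1$, never at the root $0$. Concretely,
$b\cdot 1=-\mathtt{h}\,\beta(\mathtt{h}+2)/\bigl(2c\,\alpha(\mathtt{h}+2)\bigr)$
has a pole of exact order $\mathtt{m}(0)$ at $-2$, whereas $D_u$ allows only order $\mathtt{m}(0)-1$ there.

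Since $1\in D_u$, the space $D_u$ is not $\mathbf{A}_1$-stable, and $\mathbf{A}_1\mathbb{C}[\mathtt{h}]\not\subseteq D_u$. The smallest instance is $u=\mathtt{h}$: then $D_u=\mathbb{C}[\mathtt{h}]$, but $b\cdot 1=-\frac12+\frac{1}{\mathtt{h}+2}$.

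Your analogy with the resonant case is also off. Here $s-r=0$, so ``$n=0$'' rather than $1$, and the $\mathfrak{sl}_2$ correction term $|\{j : r_j\in\{s-4,\dots,s-2i\}\}|$ is never triggered.

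\textbf{The real obstruction is Lemma~\ref{kem-3-31}.} Your plan, like the paper's argument, takes this lemma for granted. With $\omega=0$, the point $0$ lies in both sets \eqref{eq-12n1} and \eqref{eq-eq23} as soon as $\mathtt{m}(0)>0$. So the disjointness argument of Proposition~\ref{prop-s1.5-1} breaks down, and indeed $1\notin F_u$. For $u=\mathtt{h}$, the subspace $\mathtt{h}\mathbb{C}[\mathtt{h}]$ is already a simple submodule.

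In general, on the orbit $2\mathbb{Z}$ one has
$\mathrm{ord}_{2k}(a\cdot v)=\mathrm{ord}_{2k-2}(v)+\mathtt{m}(0)\delta_{k,0}$ and
$\mathrm{ord}_{2k}(b\cdot v)=\mathrm{ord}_{2k+2}(v)-\mathtt{m}(0)\delta_{k,-1}+\delta_{k,0}$.
From these formulas one checks that, for $\mathtt{m}(0)>0$, the subspace $\{v\in D_u : v(0)=0\}$ is a submodule. It is in fact $F_u$. To see this, pass to the $G_\sigma$-equivalent representative $u\,(\mathtt{h}-4)^{\mathtt{m}(0)}/\mathtt{h}^{\mathtt{m}(0)}$, which moves the zero from $0$ to $4$. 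There the $\mathfrak{sl}_2$ pattern holds with $n=2$, and the isomorphism is multiplication by $(\mathtt{h}(\mathtt{h}-2))^{-\mathtt{m}(0)}$.

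So the bound $\mathtt{m}(0)-1$ in the last bullet is right, but one must additionally impose vanishing at $0$. Your plan, and the statement as written, go through only when $\mathtt{m}(0)\leq 0$.
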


\begin{proof}
Mutatis mutandis the proof of Theorem~\ref{thm-main}. 
\end{proof}

\begin{corollary}\label{cor-2}
The module $F_u$ is finitely generated over $\mathbb{C}[\mathtt{h}]$
if and only if $\mathtt{m}$ is either the zero function or
$\mathtt{m}(s)\neq 0$ implies $s=0$ and, additionally,
$\mathtt{m}(0)=1$.
\end{corollary}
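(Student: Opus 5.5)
By Theorem~\ref{thm-main-2}, $F_u=D_u$, so the plan is to count how many genuine fractions appear in the spanning set of $D_u$. First, $D_u$ is torsion free over $\mathbb{C}[\mathtt{h}]$, being a subspace of $\mathbb{C}(\mathtt{h})$. It contains $\mathbb{C}[\mathtt{h}]$ by Lemma~\ref{kem-3-31}. From the explicit description, $D_u$ is a $\mathbb{C}[\mathtt{h}]$-submodule of the form $\mathbb{C}[\mathtt{h}]+\sum_{p}\mathbb{C}\,p^{-1}$, where $p$ runs over the listed denominators; the listed set of fractions with a given pole is closed under lowering the pole order. Such a module is finitely generated over $\mathbb{C}[\mathtt{h}]$ if and only if it is contained in $\frac{1}{g}\mathbb{C}[\mathtt{h}]$ for some polynomial $g$. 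This happens if and only if the set of poles is finite, with bounded orders, i.e.\ if and only if only finitely many of the basis fractions $\frac{1}{(\mathtt{h}-t)^k}$ from $\mathbf{B}$ occur.

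Next I go through the bullets of the definition of $D_u$. If $\mathtt{m}(s)<0$ for some $s$, the second bullet gives the fractions $\frac{1}{\mathtt{h}-s-2i}$ for all $i\geq 0$, which are infinitely many distinct poles, so $F_u$ is not finitely generated. If $\mathtt{m}(s)>0$ for some $s\neq 0$, the third bullet (with $k=1$ allowed) gives infinitely many poles $s-2i$. If $\mathtt{m}(0)\geq 2$, the fourth bullet gives the infinitely many poles $-2i$ with $k=1$. Conversely, suppose $\mathtt{m}=0$, or $\mathtt{m}$ is supported at $0$ with $\mathtt{m}(0)=1$. Then every bullet except the first is empty, since the fourth requires $k\leq 0$. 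Hence $F_u=\mathbb{C}[\mathtt{h}]$, which is free of rank one.

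The only delicate point is the equivalence in the first paragraph, namely that finitely many fractions is the same as finite generation. In one direction, a finite generating set has a common denominator $g$, so every pole of an element of $F_u$ is a root of $g$ of order at most its multiplicity. Each listed fraction $\frac{1}{(\mathtt{h}-t)^k}$ lies in $F_u$, and $\frac{1}{g}\mathbb{C}[\mathtt{h}]$ admits only finitely many such $(t,k)$; hence infinitely many listed fractions would force infinitely many poles. In the other direction, finitely many fractions together with $1$ generate $F_u$ over $\mathbb{C}[\mathtt{h}]$. I expect no real obstacle beyond checking that the bullets in $D_u$ are read correctly, in particular that the bound $k\leq\mathtt{m}(0)-1$ empties the last bullet exactly when $\mathtt{m}(0)=1$.
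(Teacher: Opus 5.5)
Your approach matches the paper's. The paper gives no separate proof of Corollary~\ref{cor-2}, but its proof of Corollary~\ref{cor} is exactly this count of genuine fractions. Your argument that finite generation is equivalent to finitely many fractions (common denominator one way, $1$ plus the finitely many fractions the other way) is fine, and so is your bullet-by-bullet reading of $D_u$.

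The problem is with what you import when $\mathtt{m}(0)>0$. In that case Lemma~\ref{kem-3-31} and the equality $D_u=F_u$ of Theorem~\ref{thm-main-2} fail, so your sentence ``$F_u=\mathbb{C}[\mathtt{h}]$'' for $\mathtt{m}(0)=1$ is false. Take $u=\mathtt{h}$. On $N_u$ we have $a\cdot v=\mathtt{h}\,v(\mathtt{h}-2)$ and $b\cdot v=-\frac{\mathtt{h}}{2}\cdot\frac{v(\mathtt{h}+2)}{\mathtt{h}+2}$. Hence $b\cdot 1=-\frac{\mathtt{h}}{2(\mathtt{h}+2)}\notin\mathbb{C}[\mathtt{h}]$, so $D_u=\mathbb{C}[\mathtt{h}]$ is not even a submodule. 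On the other hand, $a\cdot \mathtt{h}p(\mathtt{h})=\mathtt{h}(\mathtt{h}-2)p(\mathtt{h}-2)$ and $b\cdot \mathtt{h}p(\mathtt{h})=-\frac{\mathtt{h}}{2}p(\mathtt{h}+2)$, so $\mathtt{h}\mathbb{C}[\mathtt{h}]$ is a submodule. It is simple: a stable ideal $(\mathtt{h}g)$ forces $g(\mathtt{h})\mid g(\mathtt{h}+2)$, hence $g$ is constant. Therefore $F_u=\mathtt{h}\mathbb{C}[\mathtt{h}]\not\ni 1$.

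The ``mutatis mutandis'' transfer of Proposition~\ref{prop-s1.5-1} breaks here. The factor $\vartheta-(\mathtt{h}+1)^2$ is replaced by $\mathtt{h}$, whose root $0$ lies in $\mathbb{C}_0$. So for $\mathtt{m}(0)>0$ the analogues of \eqref{eq-12n1} and \eqref{eq-eq23} both contain $0$. In Section~\ref{s2}, by contrast, $\omega$ was chosen precisely to keep the roots far away.

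The corollary is nevertheless true, and your count can be rescued. Write a $\mathbb{C}[\mathtt{h}]$-submodule of $\mathbb{C}(\mathtt{h})$ as $\{v:\mathrm{ord}_t(v)\geq\lambda_t \text{ for all } t\}$. Stability under $a$ and $b$ then reads $\lambda_t\leq\lambda_{t-2}+\mathrm{ord}_t(u)$ and $\lambda_t\leq\lambda_{t+2}-\mathrm{ord}_{t+2}(u)+\mathrm{ord}_t(\mathtt{h})$. The socle corresponds to the largest admissible $\lambda$, subject to $\lambda\leq 0$ far out on each orbit.
\begin{itemize}
\item Away from $2\mathbb{Z}$, this reproduces $D_u$.
\item On $2\mathbb{Z}$ with $\mathtt{m}(0)>0$, it gives $\lambda_0=1$, $\lambda_{2j}=0$ and $\lambda_{-2j}=1-\mathtt{m}(0)$ for $j>0$.
\end{itemize}
Hence $F_u=\mathtt{h}D_u$ when $\mathtt{m}(0)>0$, and $F_u=D_u$ otherwise. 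Multiplication by $\mathtt{h}$ is an isomorphism $D_u\to\mathtt{h}D_u$ of $\mathbb{C}[\mathtt{h}]$-modules, so $F_u$ is finitely generated if and only if $D_u$ is. Your counting therefore goes through unchanged. In the case $\mathtt{m}(0)=1$, the correct conclusion is $F_u=\mathtt{h}\mathbb{C}[\mathtt{h}]$, which is still free of rank one. Simply drop the appeal to Lemma~\ref{kem-3-31}.
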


\section{Application to $\mathfrak{osp}(1|2)$}\label{s4}

\subsection{Setup, SCasimir, and the oscillator quotient}\label{s4.1}
In this subsection we consider an application of our approach to yet another
situation, namely,  the Lie superalgebra $\mathfrak s=\mathfrak{osp}(1|2)$.
The Block-style description of simple modules for this superalgebra
can be found in \cite{B000}.

Consider the Lie superalgebra $\mathfrak s=\mathfrak{osp}(1|2)$ with even generators $\mathtt e,\mathtt f,\mathtt h$
and odd generators $\mathtt p,\mathtt q$, satisfying
\[
[\mathtt h,\mathtt e]=2\mathtt e,\qquad [\mathtt h,\mathtt f]=-2\mathtt f,\qquad [\mathtt e,\mathtt f]=\mathtt h,
\]
\[
[\mathtt h,\mathtt p]=\mathtt p,\qquad [\mathtt h,\mathtt q]=-\mathtt q,\qquad
\mathtt p^2=\mathtt e,\qquad \mathtt q^2=-\mathtt f,\qquad \mathtt p\mathtt q+\mathtt q\mathtt p=\mathtt h.
\]
Set
\begin{equation}\label{eq:s4-sigma}
\Sigma:=\mathtt p\mathtt q-\mathtt q\mathtt p+\frac12\in U(\mathfrak s),
\end{equation}
which sometimes is called the SCasimir element. As usual, we denote by 
$\Pi$ the parity change functor.

 Most of the structural results for $U(\mathfrak{osp}(1|2))$ used in this section can be found in the work of Pinczon~\cite{Pi90}. The following is a well-known  result that follows from the defining relations of $\mathfrak s$.
\begin{lemma}\label{lem:s4-sigma}
We have $\Sigma\mathtt p+\mathtt p\Sigma=0$ and $\Sigma\mathtt q+\mathtt q\Sigma=0$.
Consequently, $[\Sigma,\mathtt h]=[\Sigma,\mathtt e]=[\Sigma,\mathtt f]=0$ and $\Sigma^2$ is central in $U(\mathfrak s)$.
\end{lemma}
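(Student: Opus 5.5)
We prove the anticommutation relations by direct computation inside $U(\mathfrak s)$, using only the defining relations. The derived consequences then follow formally. Throughout, the bracket of two odd elements is the anticommutator, and the relations $[\mathtt h,\mathtt p]=\mathtt p$, $[\mathtt h,\mathtt q]=-\mathtt q$ read $\mathtt h\mathtt p=\mathtt p(\mathtt h+1)$ and $\mathtt h\mathtt q=\mathtt q(\mathtt h-1)$.

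\emph{Step 1: the relation with $\mathtt p$.} Expand
\[
\Sigma\mathtt p+\mathtt p\Sigma
=\mathtt p\mathtt q\mathtt p-\mathtt q\mathtt p^2+\mathtt p^2\mathtt q-\mathtt p\mathtt q\mathtt p+\mathtt p
=[\mathtt e,\mathtt q]+\mathtt p .
\]
It therefore suffices to show $[\mathtt e,\mathtt q]=-\mathtt p$. Write $\mathtt q\mathtt p=\mathtt h-\mathtt p\mathtt q$. Then
\[
\mathtt q\mathtt p^2=(\mathtt h-\mathtt p\mathtt q)\mathtt p=\mathtt h\mathtt p-\mathtt p(\mathtt h-\mathtt p\mathtt q)
=\mathtt h\mathtt p-\mathtt p\mathtt h+\mathtt p^2\mathtt q=\mathtt p+\mathtt e\mathtt q .
\]
Hence $\mathtt e\mathtt q-\mathtt q\mathtt e=-\mathtt p$, which gives $\Sigma\mathtt p+\mathtt p\Sigma=0$.

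\emph{Step 2: the relation with $\mathtt q$.} The same expansion gives
\[
\Sigma\mathtt q+\mathtt q\Sigma=\mathtt p\mathtt q^2-\mathtt q^2\mathtt p+\mathtt q .
\]
Since $\mathtt q^2=-\mathtt f$, this equals $-[\mathtt p,\mathtt f]+\mathtt q$. Now compute
\[
\mathtt p\mathtt q^2=(\mathtt h-\mathtt q\mathtt p)\mathtt q=\mathtt h\mathtt q-\mathtt q\mathtt h+\mathtt q^2\mathtt p=-\mathtt q+\mathtt q^2\mathtt p .
\]
So $\mathtt p\mathtt q^2-\mathtt q^2\mathtt p=-\mathtt q$, and therefore $\Sigma\mathtt q+\mathtt q\Sigma=0$. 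The only care needed in Steps~1 and~2 is sign bookkeeping; the $\frac12$ in $\Sigma$ contributes exactly the $+\mathtt p$ and $+\mathtt q$ terms that cancel.

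\emph{Step 3: consequences.} If an element anticommutes with $\mathtt p$, it commutes with $\mathtt p^2$:
\[
\Sigma\mathtt p^2=-\mathtt p\Sigma\mathtt p=\mathtt p^2\Sigma .
\]
This gives $[\Sigma,\mathtt e]=0$, and in the same way $[\Sigma,\mathtt q^2]=0$, i.e. $[\Sigma,\mathtt f]=0$. For $\mathtt h=\mathtt p\mathtt q+\mathtt q\mathtt p$, each product of two odd generators commutes with $\Sigma$ because the two sign changes cancel, so $[\Sigma,\mathtt h]=0$. Finally, $\Sigma^2$ commutes with $\mathtt p$ and $\mathtt q$, again by two sign changes. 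It also commutes with $\mathtt e,\mathtt f,\mathtt h$ since $\Sigma$ does. Because these elements generate $U(\mathfrak s)$, $\Sigma^2$ is central.
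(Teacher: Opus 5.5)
Your proof is correct. It is the same direct verification from the defining relations that the paper has in mind: the paper states the lemma as well known and gives no details, and your computations $[\mathtt e,\mathtt q]=-\mathtt p$ and $\mathtt p\mathtt q^2-\mathtt q^2\mathtt p=-\mathtt q$, together with the formal sign-cancellation consequences in Step~3, supply exactly those details.
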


Recall that $\mathbf{A}_1=\langle a,b\,:\,ab-ba=1\rangle$ is the first Weyl algebra.
Define an algebra homomorphism $\Theta:U(\mathfrak s)\to \mathbf{A}_1$ by the assignment
\begin{equation}\label{eq:s4-theta}
\Theta(\mathtt p)=\frac{1}{\sqrt2}\,a,\qquad
\Theta(\mathtt q)=-\frac{1}{\sqrt2}\,b,
\end{equation}
and, hence, $\Theta(\mathtt h)=-\frac12(ab+ba)$,
$\Theta(\mathtt e)=\frac12\,a^2$, and 
$\Theta(\mathtt f)=-\frac12\,b^2$. The fact that $\Theta$ is a surjective homomorphism 
follows directly from the definition. We also have that $\Theta(\Sigma)=0$, so  $\Theta$ factors through $\overline U:=U(\mathfrak s)/U(\mathfrak s)\Sigma U(\mathfrak s)$.
\begin{proposition}\label{prop:s4-Ubar-A1}
The induced map $\overline{\Theta} : \overline U\to \mathbf{A}_1$ is an isomorphism.
\end{proposition}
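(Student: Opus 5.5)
Surjectivity of $\overline{\Theta}$ is already known, since $\Theta$ is surjective and kills $\Sigma$. So the plan is to prove injectivity, and I would do this by showing that $\overline U$ is "no bigger" than $\mathbf{A}_1$. Concretely, I will construct a spanning set of $\overline U$ and show that $\Theta$ maps it to a basis of $\mathbf{A}_1$.

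By the PBW theorem for Lie superalgebras, $U(\mathfrak s)$ has a basis of ordered monomials $\mathtt q^{i}\mathtt h^{j}\mathtt p^{k}$ with $i,j,k\ge 0$. This works because $\mathtt p^2=\mathtt e$ and $\mathtt q^2=-\mathtt f$, so the even generators $\mathtt e,\mathtt f$ are absorbed as powers of the odd ones.

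\begin{enumerate}
\item In $\overline U$ we have $\mathtt p\mathtt q-\mathtt q\mathtt p=-\tfrac12$. Combined with $\mathtt p\mathtt q+\mathtt q\mathtt p=\mathtt h$, this gives $\mathtt h=2\mathtt q\mathtt p-\tfrac12$.
\item Hence $\overline U$ is generated by the images of $\mathtt p$ and $\mathtt q$ alone, subject to $\mathtt p\mathtt q-\mathtt q\mathtt p=-\tfrac12$.
\item Using this single commutation rule to reorder, every element of $\overline U$ is a linear combination of the monomials $\mathtt q^{i}\mathtt p^{k}$ with $i,k\ge0$.
\end{enumerate}

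Under $\Theta$, the monomial $\mathtt q^{i}\mathtt p^{k}$ goes to a nonzero scalar multiple of $b^{i}a^{k}$. The monomials $b^ia^k$ form the standard PBW basis of $\mathbf{A}_1$. Therefore the images of the spanning set are linearly independent, so the spanning set of $\overline U$ is itself linearly independent, hence a basis, and $\overline\Theta$ is injective.

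An alternative route avoids the spanning argument. Define a map $\mathbf{A}_1\to\overline U$ by $a\mapsto\sqrt2\,\mathtt p$ and $b\mapsto-\sqrt2\,\mathtt q$. It is well defined because $ab-ba\mapsto -2(\mathtt p\mathtt q-\mathtt q\mathtt p)=1$ in $\overline U$. This map is inverse to $\overline\Theta$ on generators, and the generator statement is exactly step 2 above.

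The only real point to check is step 1: the relations of $\mathfrak s$ must give $\mathtt p\mathtt q+\mathtt q\mathtt p=\mathtt h$ in $U(\mathfrak s)$. This is an odd–odd bracket, so the anticommutator is the correct super-bracket. Once this holds, $\overline U$ is generated by $\mathtt p$ and $\mathtt q$, and both approaches go through. I expect no serious obstacle beyond this sign and normalization bookkeeping, including checking that $\Theta(\Sigma)=0$ with the given scalars.
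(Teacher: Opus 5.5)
Your proposal is correct. Your ``alternative route'' is exactly the paper's proof, which exhibits $a\mapsto\sqrt2\,\overline{\mathtt p}$, $b\mapsto-\sqrt2\,\overline{\mathtt q}$ as an inverse of $\overline\Theta$. You also supply the two checks the paper calls easy:
\begin{itemize}
\item well-definedness, since $ab-ba\mapsto-2(\mathtt p\mathtt q-\mathtt q\mathtt p)=1-2\Sigma\equiv1$;
\item generation of $\overline U$ by $\overline{\mathtt p},\overline{\mathtt q}$, which is what makes the composite $\overline U\to\mathbf{A}_1\to\overline U$ the identity.
\end{itemize}

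Your primary route is genuinely different. You show that $\{\overline{\mathtt q}^{\,i}\overline{\mathtt p}^{\,k}\}$ spans $\overline U$, and that $\overline\Theta$ sends it to nonzero multiples of the standard basis $\{b^ia^k\}$ of $\mathbf{A}_1$. This needs the extra input of the PBW basis of $\mathbf{A}_1$, whereas the paper's route uses only its presentation. In return, your route produces an explicit basis of $\overline U$.

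Minor remarks:
\begin{itemize}
\item Neither route needs the PBW theorem for $U(\mathfrak s)$.
\item Generation by $\mathtt p,\mathtt q$ already holds in $U(\mathfrak s)$, because $\mathtt h=\mathtt p\mathtt q+\mathtt q\mathtt p$, $\mathtt e=\mathtt p^2$ and $\mathtt f=-\mathtt q^2$ are defining relations. So there is nothing to verify in what you call the ``only real point''.
\item The formula $\mathtt h=2\mathtt q\mathtt p-\tfrac12$, which does use $\Sigma=0$, matters only for the reordering in your primary route.
\item In your step 2, ``subject to'' must mean only that the relation holds in $\overline U$, not that it gives a complete presentation, since that is exactly what is being proved.
\end{itemize}
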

\begin{proof} One easily checks that there is a homomorphism $\mathbf{A}_1 \to \overline U$ defined via
$a\mapsto \sqrt2\,\overline{\mathtt p}$ and $b\mapsto -\sqrt2\,\overline{\mathtt q}$ and that this homomorphism is the inverse of $\overline{\Theta}$.
\end{proof}

\medskip
\noindent\textbf{Torsion-free rank and superrank.}
Set $H:=U(\mathfrak h)=\mathbb{C}[\mathtt h]$ and $\Bbbk:=\mathbb{C}(\mathtt h)$ as before. We consider $\mathtt h$ (hence, $H$ and $\Bbbk$) as purely even.
If $X$ is an $H$-torsion free module, define its \emph{$H$-rank} by
\[
\mathrm{rk}_H(X):=\dim_{\Bbbk}\bigl(\Bbbk\otimes_H X\bigr).
\]
If $M=M_{\bar0}\oplus M_{\bar1}$ is a $\mathbb{Z}_2$-graded $H$-module, we say that $M$ is
\emph{$H$-torsion free of superrank $(1|1)$} if both $M_{\bar0}$ and $M_{\bar1}$ are $H$-torsion free and
\[
\dim_{\Bbbk}(\Bbbk\otimes_H M_{\bar0})=1,\qquad \dim_{\Bbbk}(\Bbbk\otimes_H M_{\bar1})=1.
\]

\subsection{Simple \emph{ungraded} $U(\mathfrak s)$-modules of $H$-rank $1$}\label{s4.2}

\begin{lemma}\label{lem:s4-ungraded-sigma0}
Let $M$ be a (not necessarily $\mathbb Z_2$-graded) $U(\mathfrak s)$-module which is $H$-torsion free of rank $1$.
Then $\Sigma$ acts by $0$ on $M$.
\end{lemma}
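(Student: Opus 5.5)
The plan is to show that $\Sigma$ acts on $M$ as multiplication by a rational function $\varphi(\mathtt h)$, and then to use the anticommutation $\Sigma\mathtt p=-\mathtt p\Sigma$ from Lemma~\ref{lem:s4-sigma} to force $\varphi=0$.

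\emph{Step 1: $\Sigma$ acts as a rational function.} By Lemma~\ref{lem:s4-sigma}, $\Sigma$ commutes with $\mathtt h$, so $\Sigma$ is an $H$-module endomorphism of $M$. Since $M$ is $H$-torsion free, the natural map $M\to \Bbbk\otimes_H M$ is injective. The endomorphism $\Sigma$ extends $\Bbbk$-linearly to $\Bbbk\otimes_H M$, which is one-dimensional over $\Bbbk$. Hence there is $\varphi\in\Bbbk$ with $\Sigma\cdot m=\varphi(\mathtt h)m$ inside $\Bbbk\otimes_H M$, for all $m\in M$.

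\emph{Step 2: extend $\mathtt p$ to the localization.} From $[\mathtt h,\mathtt p]=\mathtt p$ we get $\mathtt p\, g(\mathtt h)=g(\mathtt h-1)\,\mathtt p$ for every $g\in H$. Therefore the formula
\[
\mathtt p\cdot\bigl(g(\mathtt h)^{-1}m\bigr):=g(\mathtt h-1)^{-1}\,\mathtt p m
\]
gives a well-defined semilinear extension of $\mathtt p$ to $\Bbbk\otimes_H M$. Well-definedness is the usual Ore-type check, using the fact that the set of nonzero polynomials is stable under $\mathtt h\mapsto\mathtt h-1$. With this extension we have $\mathtt p\,(\psi(\mathtt h)v)=\psi(\mathtt h-1)\,\mathtt p v$ for all $\psi\in\Bbbk$ and $v\in\Bbbk\otimes_H M$.

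\emph{Step 3: derive a functional equation for $\varphi$.} For $m\in M$, applying $\Sigma\mathtt p+\mathtt p\Sigma=0$ gives
\[
0=\varphi(\mathtt h)\,\mathtt p m+\mathtt p\bigl(\varphi(\mathtt h)m\bigr)=\bigl(\varphi(\mathtt h)+\varphi(\mathtt h-1)\bigr)\mathtt p m .
\]
Now $\mathtt p$ cannot act by zero on $M$. Indeed, if it did, then $\mathtt h=\mathtt p\mathtt q+\mathtt q\mathtt p$ would also act as zero on $M$, contradicting the assumption that $M\neq 0$ is $H$-torsion free. So choose $m$ with $\mathtt p m\neq 0$. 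Multiplying by a common denominator and using torsion-freeness, we conclude
\[
\varphi(\mathtt h)+\varphi(\mathtt h-1)=0 \quad\text{in } \Bbbk .
\]

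\emph{Step 4: conclude $\varphi=0$.} Applying the functional equation twice gives $\varphi(\mathtt h)=\varphi(\mathtt h-2)$. A rational function invariant under a nonzero shift is constant, since its finite set of zeros and poles would otherwise be shift-invariant. So $\varphi=c\in\mathbb C$, and then $2c=0$, i.e. $c=0$. Hence $\Sigma$ acts by zero on $\Bbbk\otimes_H M$, and therefore on $M\subset \Bbbk\otimes_H M$.

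The main technical point is Step 2, namely making rigorous that $\mathtt p$ extends compatibly to the localization. If one prefers to avoid this, the same argument can be run directly in $M$. Write $\varphi=a/b$ with $a,b\in H$. Then $b(\mathtt h)\Sigma m=a(\mathtt h)m$ for all $m\in M$. Applying $\mathtt p$ and clearing the shifted denominators yields a polynomial identity of the form $P(\mathtt h)\,\mathtt p m=0$, and torsion-freeness then gives $P=0$.
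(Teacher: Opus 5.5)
Your proof is correct and follows essentially the same route as the paper: localize to the one-dimensional $\Bbbk$-space $\Bbbk\otimes_H M$, observe that $\Sigma$ acts there by some $\varphi(\mathtt h)\in\Bbbk$, and use $\Sigma\mathtt p+\mathtt p\Sigma=0$ to get $\varphi(\mathtt h)+\varphi(\mathtt h-1)=0$, which forces $\varphi=0$. You are in fact more careful than the paper on three points it leaves implicit: you justify the semilinear extension of $\mathtt p$ to the localization, you show $\mathtt p\neq 0$ on $M$ via $\mathtt h=\mathtt p\mathtt q+\mathtt q\mathtt p$, and you treat the nonzero-constant case, which the paper's zeros-and-poles argument does not cover.
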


\begin{proof}
Localize $M$ to $M_{\Bbbk}:=\Bbbk\otimes_{H}M\cong \Bbbk$ and pick $v\neq 0$
in $M_{\Bbbk}$. Then $M_{\Bbbk}=\Bbbk v$.
From $[\mathtt h,\mathtt p]=\mathtt p$ and $[\mathtt h,\mathtt q]=-\mathtt q$,
there exist $u,w\in \Bbbk^{\times}$ such that
\[
\mathtt p\cdot(bv)=b(\mathtt h-1)\,u(\mathtt h)\,v,\qquad
\mathtt q\cdot(bv)=b(\mathtt h+1)\,w(\mathtt h)\,v,\qquad \text{ for any } b\in \Bbbk.
\]
Since $[\Sigma,\mathtt h]=0$ by Lemma~\ref{lem:s4-sigma}, 
$\Sigma$ acts on $M_{\Bbbk}$ as multiplication by some $\varphi(\mathtt h)\in \Bbbk$.
Using $\Sigma\mathtt p+\mathtt p\Sigma=0$ gives $\varphi(\mathtt h)+\varphi(\mathtt h-1)=0$.
If $\varphi\neq 0$, then $\varphi(\mathtt h-n)=(-1)^n\varphi(\mathtt h)$, for all $n\ge 0$, forcing infinitely many translates of any zero and of  any pole,
impossible for a rational function. Hence $\varphi=0$.
\end{proof}

Pulling back along $\Theta$ 
(which we denote by $\Theta^*$, as usual) induces a bijection between the
isomorphism classes of simple $\mathbf{A}_1$-modules which are torsion free of rank $1$ over the subalgebra $\mathbb C[ab]$
and the isomorphism classes of simple \emph{ungraded} $U(\mathfrak s)$-modules which are $H$-torsion free of rank $1$. More precisely, using the notation of Section~\ref{s3.2}, we have the following. 

\begin{theorem}\label{thm:s4-ungraded}
The modules $\Theta^*(F_u)=\Theta^*(D_u)$  form a complete and irredundant list of 
representatives of the isomorphism classes of simple
$U(\mathfrak s)$-modules  which are $H$-torsion free of rank $1$. 
\end{theorem}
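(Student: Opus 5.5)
The plan is to reduce the statement to Theorem~\ref{thm-main-2} by pulling back along $\Theta$. We show that every simple $U(\mathfrak s)$-module $M$ that is $H$-torsion free of rank $1$ factors through $\overline U\cong\mathbf{A}_1$, and that pullback along $\Theta$ preserves both simplicity and the torsion-free rank condition.

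First, Lemma~\ref{lem:s4-ungraded-sigma0} gives $\Sigma M=0$. Since $\Sigma\mathtt p=-\mathtt p\Sigma$ and $\Sigma\mathtt q=-\mathtt q\Sigma$ (Lemma~\ref{lem:s4-sigma}), and $\Sigma$ commutes with $\mathtt h,\mathtt e,\mathtt f$, the two-sided ideal $U(\mathfrak s)\Sigma U(\mathfrak s)$ equals $U(\mathfrak s)\Sigma$. Hence it annihilates $M$, and $M$ is a module over $\overline U$. By Proposition~\ref{prop:s4-Ubar-A1}, $M\cong\Theta^*(X)$ for an $\mathbf{A}_1$-module $X$ with the same underlying space.

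Second, we check that the pullback preserves the relevant properties.
\begin{itemize}
\item Submodules of $X$ and of $\Theta^*(X)$ coincide, because $\Theta$ is surjective. So $X$ is simple if and only if $M$ is.
\item Since $\Theta(\mathtt h)=-\tfrac12(ab+ba)=-ab+\tfrac12$, the action of $H$ on $M$ is the action of $\mathbb{C}[ab]$ on $X$ after the affine change of variable $ab\mapsto -\mathtt h+\tfrac12$. Torsion-freeness and rank over $\mathbb{C}[ab]$ therefore correspond exactly to those over $H$.
\item The $\mathtt h$ of Section~\ref{s3} is $-2ba=-2ab+2$, again an affine function of $ab$, so all three polynomial subalgebras coincide. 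The description of $D_u$ in terms of $\mathtt h$ then transfers without change, up to this harmless reparametrisation.
\end{itemize}

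Third, for $\mathbf{A}_1$-modules $X,Y$, isomorphisms $\Theta^*X\cong\Theta^*Y$ are exactly the $\mathbf{A}_1$-isomorphisms $X\cong Y$, again by surjectivity of $\Theta$. Combining this with Theorem~\ref{thm-main-2}, which says the $F_u=D_u$ form a complete irredundant list on the $\mathbf{A}_1$ side, we get both completeness and irredundancy of the list $\Theta^*(F_u)$. Conversely, each $\Theta^*(F_u)$ is simple, $H$-torsion free and of rank $1$ by the second step.

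The only point needing care is the identity $U(\mathfrak s)\Sigma U(\mathfrak s)=U(\mathfrak s)\Sigma$, i.e.\ that $\Sigma$ is normal. It follows from the skew-commutation relations of Lemma~\ref{lem:s4-sigma}, applied to PBW monomials in $\mathtt h,\mathtt p,\mathtt q$: moving $\Sigma$ past such a monomial only introduces a sign. I expect no other obstacle.
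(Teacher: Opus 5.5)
Your proposal is correct and follows the same route as the paper: Lemma~\ref{lem:s4-ungraded-sigma0} forces $\Sigma M=0$, so $M$ factors through $\overline U\cong\mathbf{A}_1$, and $\Theta(\mathtt h)=-ab+\tfrac12$ identifies $H$ with $\mathbb{C}[ab]$, so torsion-freeness and rank transfer; surjectivity of $\Theta$ then lets the classification of Section~\ref{s3.2} carry over with isomorphism classes preserved. The normality of $\Sigma$ that you single out as the delicate point is true but not needed, because the annihilator of $M$ is automatically a two-sided ideal, so $\Sigma M=0$ already gives $U(\mathfrak s)\Sigma U(\mathfrak s)M=0$.
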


\begin{proof}
By Lemma~\ref{lem:s4-ungraded-sigma0}, any such $U(\mathfrak s)$-module $M$ satisfies $\Sigma M=0$, hence factors through
$\overline U=U(\mathfrak s)/\langle\Sigma\rangle\cong \mathbf{A}_1$ by Proposition~\ref{prop:s4-Ubar-A1}.
Conversely, if $F$ is $\mathbf{A}_1$-module which is  torsion free of 
rank $1$ over $\mathbb C[ab]$, then $\Theta^*(F)$ is $H$-torsion free of rank $1$.
Indeed, $\Theta(\mathtt h)=-\frac12(ab+ba)=-ab+\frac12$, and hence
$H=\mathbb{C}[\mathtt h]$ identifies with $\mathbb{C}[ab]$.
The classification statement follows from the results in Section~\ref{s3.2}.
\end{proof}

\subsection{Simple \emph{graded} $U(\mathfrak s)$-modules of $H$-superrank $(1|1)$}\label{s4.3}

Let $M=M_{\bar0}\oplus M_{\bar1}$ be a $\mathbb{Z}_2$-graded $U(\mathfrak s)$-module which is $H$-torsion free of superrank $(1|1)$.
Since $\Sigma^2$ is central by Lemma~\ref{lem:s4-sigma}, it acts on $M$ by a scalar.
Choose $\lambda\in\mathbb{C}$ such that
\begin{equation}\label{eq:s4-lambda}
\Sigma^2=(\lambda+\tfrac12)^2\cdot \mathrm{Id}_M.
\end{equation}

Let $\Bbbk=\mathbb{C}(\mathtt h)$ and let $\sigma_1(\mathtt h)=\mathtt h-1$.
Set $R_1:=\Bbbk[x,x^{-1},\sigma_1]$.
For $u\in \Bbbk^{\times}$, let $N_u$ be the one-dimensional 
$R_1$-module on $\Bbbk$ with $x\cdot b=\sigma_1(b)\,u$.
Let $G_{\sigma_1}$ be the subgroup defined as in Section~\ref{s1.2} (with $\sigma$ replaced by $\sigma_1$).

For $u\in \Bbbk^{\times}$, define a $\mathbb{Z}_2$-graded $U(\mathfrak s)$-module $M_{u,\lambda}:=N_u\oplus N_u$ (even $\oplus$ odd) by
\begin{equation}\label{eq:s4-matrix}
\mathtt h\mapsto \mathtt h\,I_2,\qquad
\mathtt p\mapsto \begin{pmatrix}0&x\\ x&0\end{pmatrix},\qquad
\mathtt q\mapsto \begin{pmatrix}0&\frac{\mathtt h-\lambda}{2}\,x^{-1}\\[2pt]
\frac{\mathtt h+\lambda+1}{2}\,x^{-1}&0\end{pmatrix}.
\end{equation}

\begin{lemma}\label{lem:s4-matrix}
We have the following:
\begin{enumerate}[$($a$)$]
\item\label{lem:s4-matrix.1} The assignment \eqref{eq:s4-matrix} 
defines on $M_{u,\lambda}$ the structure of 
a $\mathbb{Z}_2$-graded $U(\mathfrak s)$-module.
\item\label{lem:s4-matrix.2} The element
$\Sigma$ acts on $M_{u,\lambda}$ as $(\lambda+\frac12)\mathrm{diag}(1,-1)$,
in particular, $\Sigma^2$ acts as in Formula~\eqref{eq:s4-lambda}.
\item\label{lem:s4-matrix.3}
For the $\mathfrak{sl}_2$-Casimir $\mathtt c$ from \eqref{eq1a} 
(with $\mathtt e=\mathtt p^2$ and $\mathtt f=-\mathtt q^2$) one has
\[
\mathtt c|_{(M_{u,\lambda})_{\bar0}}=(\lambda+1)^2,\qquad
\mathtt c|_{(M_{u,\lambda})_{\bar1}}=\lambda^2 .
\]
\end{enumerate}
\end{lemma}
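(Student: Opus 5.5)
The plan is a direct computation inside $\mathrm{End}_{\mathbb{C}}(N_u\oplus N_u)$, using only the rules $x b=\sigma_1(b)x$ and $x^{-1}b=\sigma_1^{-1}(b)x^{-1}$ for $b\in\Bbbk$. These rules hold because $N_u$ is an $R_1$-module by Lemma~\ref{lem5}. In other words, we first verify the relations at the level of $2\times 2$ matrices with entries in $R_1$, which then act on $N_u\oplus N_u$. All identities will be consequences of $x\mathtt h=(\mathtt h-1)x$ and $x^{-1}\mathtt h=(\mathtt h+1)x^{-1}$.

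For (a), the grading is clear: $\mathtt h$ is diagonal, while $\mathtt p,\mathtt q$ are off-diagonal and hence odd. The strategy is to check the defining relations involving only $\mathtt h,\mathtt p,\mathtt q$. The even generators are then defined by $\mathtt e:=\mathtt p^2$ and $\mathtt f:=-\mathtt q^2$, and the relation $[\mathtt e,\mathtt f]=\mathtt h$ follows from the super Jacobi identity once the odd relations hold. (Equivalently, the algebra is generated by $\mathtt p,\mathtt q$ subject to $[\mathtt h,\mathtt p]=\mathtt p$, $[\mathtt h,\mathtt q]=-\mathtt q$ and $\mathtt p\mathtt q+\mathtt q\mathtt p=\mathtt h$.) The checks are as follows.
\begin{itemize}
\item The relation $\mathtt h\mathtt p-\mathtt p\mathtt h=\mathtt p$ reduces to $\mathtt h x-x\mathtt h=x$.
\item The relation for $\mathtt q$ reduces to $\mathtt h\, g x^{-1}-g x^{-1}\mathtt h=-g x^{-1}$ for $g\in\Bbbk$.
\item For the anticommutator, $\mathtt p\mathtt q$ is diagonal with entries $x\frac{\mathtt h+\lambda+1}{2}x^{-1}=\frac{\mathtt h+\lambda}{2}$ and $x\frac{\mathtt h-\lambda}{2}x^{-1}=\frac{\mathtt h-1-\lambda}{2}$.
\item Likewise $\mathtt q\mathtt p=\mathrm{diag}\bigl(\frac{\mathtt h-\lambda}{2},\frac{\mathtt h+\lambda+1}{2}\bigr)$.
\end{itemize}
Summing the last two gives $\mathtt p\mathtt q+\mathtt q\mathtt p=\mathtt h I_2$, as required.

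For (b), the same two diagonal matrices give
\[
\mathtt p\mathtt q-\mathtt q\mathtt p+\tfrac12=\mathrm{diag}\bigl(\lambda+\tfrac12,\,-\lambda-\tfrac12\bigr),
\]
and squaring yields \eqref{eq:s4-lambda}.

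For (c), compute $\mathtt e=\mathtt p^2=x^2 I_2$. Then compute
\[
\mathtt f=-\mathtt q^2=-\mathrm{diag}\Bigl(\tfrac{\mathtt h-\lambda}{2}x^{-1}\tfrac{\mathtt h+\lambda+1}{2}x^{-1},\ \tfrac{\mathtt h+\lambda+1}{2}x^{-1}\tfrac{\mathtt h-\lambda}{2}x^{-1}\Bigr).
\]
Moving $x^{-1}$ to the right gives, on the even part,
\[
\mathtt f=-\tfrac14(\mathtt h-\lambda)(\mathtt h+\lambda+2)x^{-2},
\]
and on the odd part
\[
\mathtt f=-\tfrac14(\mathtt h+\lambda+1)(\mathtt h-\lambda+1)x^{-2}.
\]
Then, using \eqref{eq1a}, $\mathtt c=(\mathtt h+1)^2+4\mathtt f\mathtt e$ with $\mathtt f\mathtt e$ equal to the displayed coefficient times $x^{-2}x^2=1$. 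This gives $(\mathtt h+1)^2-(\mathtt h+1-(\lambda+1))(\mathtt h+1+(\lambda+1))=(\lambda+1)^2$ on the even part and $(\mathtt h+1)^2-(\mathtt h+1)^2+\lambda^2=\lambda^2$ on the odd part.

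The only real obstacle is bookkeeping. The main risks are getting the direction of the shift right when commuting $x^{\pm1}$ past rational functions, and remembering that $\mathtt q^2$ carries a sign. The Jacobi shortcut for $[\mathtt e,\mathtt f]=\mathtt h$ can also be replaced by a direct check that $[x^2,\mathtt f]=\mathtt h$ using the formulas above, which I would do as a sanity check.
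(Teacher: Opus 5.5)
Your proposal is correct and is essentially the paper's own argument. Like the paper, you compute directly with $2\times 2$ matrices over $R_1$ using $xr=\sigma_1(r)x$ and $x^{-1}r=\sigma_1^{-1}(r)x^{-1}$, and your diagonal forms of $\mathtt p\mathtt q$, $\mathtt q\mathtt p$ and $\mathtt q^2\mathtt p^2$ match what the paper records. Your reduction of the defining relations to $[\mathtt h,\mathtt p]=\mathtt p$, $[\mathtt h,\mathtt q]=-\mathtt q$ and $\mathtt p\mathtt q+\mathtt q\mathtt p=\mathtt h$ (with $\mathtt e:=\mathtt p^2$, $\mathtt f:=-\mathtt q^2$) is valid and just makes explicit what the paper calls a direct computation.
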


\begin{proof}
The relations are checked by direct computations 
in $R_1$, using $x r=\sigma_1(r)x$ and $x^{-1}r=\sigma_1^{-1}(r)x^{-1}$, 
for $r\in \Bbbk$. The formula for $\Sigma$ follows from 
$\Sigma=\mathtt p\mathtt q-\mathtt q\mathtt p+\frac12$ also by direct computation.
This implies Claims~\eqref{lem:s4-matrix.1} and \eqref{lem:s4-matrix.2}.

To prove Claim~\eqref{lem:s4-matrix.3}, note that 
$\mathtt c=(\mathtt h+1)^2+4\mathtt f\mathtt e=(\mathtt h+1)^2-4\mathtt q^2\mathtt p^2$.
A direct computation gives that $\mathtt q^2\mathtt p^2$ acts by
$\frac{(\mathtt h-\lambda)(\mathtt h+\lambda+2)}{4}$ on $(M_{u,\lambda})_{\bar0}$
and by $\frac{(\mathtt h+\lambda+1)(\mathtt h-\lambda+1)}{4}$ on $(M_{u,\lambda})_{\bar1}$,
which yields the stated scalars.
\end{proof}

Let $\mathfrak s_{\bar0}\cong\mathfrak{sl}_2$ be the even part of $\mathfrak s$.
This is the subalgebra spanned by $\mathtt e=\mathtt p^2$,   $\mathtt f=-\mathtt q^2$, and $\mathtt h$.
Then $(M_{u,\lambda})_{\bar0}$ is a $U_{(\lambda+1)^2}$-module.
Let $L^{\bar0}_{u,\lambda}\subset (M_{u,\lambda})_{\bar0}$ be the simple socle of $(M_{u,\lambda})_{\bar0}$ as a $U_{(\lambda+1)^2}$-module
and set
\begin{equation}\label{eq:s4-Ldef}
L_{u,\lambda}:=U(\mathfrak s)\,L^{\bar0}_{u,\lambda}\ \subset\ M_{u,\lambda}.
\end{equation}

To be able to properly formulate the isomorphism criterion,
we introduce the following group:
\[
\Gtil:=\{\pm 1\}\,G_{\sigma_1}=\{\pm g\,:\,g\in G_{\sigma_1}\}\subset \Bbbk^{\times} .
\]

\begin{theorem}\label{thm:s4-graded}
\begin{enumerate}[$($a$)$]
\item \label{thm:s4-graded.1}
For all $u\in \Bbbk^{\times}$ and $\lambda\in\mathbb{C}$, 
the module $L_{u,\lambda}$ is a simple $\mathbb{Z}_2$-graded $U(\mathfrak s)$-module
which is $H$-torsion free of superrank $(1|1)$ and 
on which $\Sigma^2$ acts as in Formula~\eqref{eq:s4-lambda}.
\item \label{thm:s4-graded.2}
Every simple $\mathbb{Z}_2$-graded $U(\mathfrak s)$-module which is $H$-torsion free of superrank $(1|1)$
is isomorphic to $L_{u,\lambda}$, for some $u\in \Bbbk^{\times}$ and some $\lambda\in\mathbb{C}$.
\item \label{thm:s4-graded.3}
For $u,v\in \Bbbk^{\times}$ and $\lambda,\lambda'\in\mathbb C$, one has:
\begin{enumerate}[$($i$)$]
\item\label{thm:s4-graded.3-1} 
$L_{u,\lambda}\cong L_{v,\lambda'}$ as $\mathbb Z_2$-graded modules 
(i.e.\ via an even isomorphisms) if and only if
\[
\lambda=\lambda' \qquad\text{and}\qquad v\in \Gtil\cdot u .
\]
\item\label{thm:s4-graded.3-2} 
$L_{u,\lambda}\cong \Pi(L_{v,\lambda'})$ as $\mathbb Z_2$-graded modules if and only if
\[
\lambda'=-\lambda-1 \qquad\text{and}\qquad v\in \Gtil\cdot u .
\]
In particular, $\Pi(L_{u,\lambda})\cong L_{u,-\lambda-1}$.
\end{enumerate}
\end{enumerate}
\end{theorem}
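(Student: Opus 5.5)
The plan is to work throughout in the localization $M_{u,\lambda}=N_u\oplus N_u$, using Lemma~\ref{lem:s4-matrix} to control $\Sigma$ and the Casimir $\mathtt c$ on each parity, and the uniqueness of the simple socle from \cite[Lemma~6.3.8]{Ma10} applied to the even part.

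\textbf{Claim (a).} First I show that every nonzero graded submodule $N\subset M_{u,\lambda}$ has $N_{\bar0}\neq 0$. The matrix of $\mathtt p$ in \eqref{eq:s4-matrix} has both off-diagonal entries equal to the invertible operator $x$, so $\mathtt p$ maps $N_{\bar1}$ injectively into $N_{\bar0}$.

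Next, $N_{\bar0}$ is a nonzero $U_{(\lambda+1)^2}$-submodule of $(M_{u,\lambda})_{\bar0}\cong\mathrm{Res}^{R}(N_u)$; here the role of $\sigma$ is played by $\sigma_1^2$, since $\mathtt e=\mathtt p^2$ acts as $x^2$. That module has a unique simple submodule $L^{\bar0}_{u,\lambda}$, so $N_{\bar0}\supseteq L^{\bar0}_{u,\lambda}$ and hence $N\supseteq L_{u,\lambda}$. Applied to submodules of $L_{u,\lambda}$, this gives simplicity.

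For the superrank: $L_{u,\lambda}\supseteq L^{\bar0}_{u,\lambda}\oplus \mathtt p L^{\bar0}_{u,\lambda}$, and both summands are nonzero $H$-submodules of $\Bbbk$. Hence each parity has $\Bbbk$-rank exactly $1$. The action of $\Sigma^2$ is given by Lemma~\ref{lem:s4-matrix}\eqref{lem:s4-matrix.2}.

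\textbf{Claim (b).} Let $M$ be simple and localize, so $M_\Bbbk=\Bbbk v_0\oplus\Bbbk v_1$. Since $[\mathtt h,\mathtt p]=\mathtt p$, we can write $\mathtt p v_0=a v_1$ and $\mathtt p v_1=a'v_0$ with $\mathtt p(bv_i)=\sigma_1(b)\mathtt p v_i$.

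The coefficient $a$ must be nonzero. Otherwise $\ker \mathtt p$ contains $\Bbbk v_0$, and then the relation $\mathtt p\mathtt q+\mathtt q\mathtt p=\mathtt h$ together with $\Sigma^2$ being scalar gives a contradiction. The same argument gives $a'\neq 0$.

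By Lemma~\ref{lem:s4-sigma}, $\Sigma$ commutes with $\mathtt h$ and anticommutes with $\mathtt p$. Arguing as in Lemma~\ref{lem:s4-ungraded-sigma0}, $\Sigma$ then acts on $v_0$ by a constant, namely $\pm(\lambda+\frac12)$; after renaming $\lambda\leftrightarrow-\lambda-1$ we may assume $\lambda+\frac12$. The relation $\mathtt p\mathtt q+\mathtt q\mathtt p=\mathtt h$ combined with this $\Sigma$ value then determines the $\mathtt q$-coefficients uniquely from $a$, $a'$ and $\lambda$.

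The remaining task is to rescale $v_0\mapsto sv_0$ and $v_1\mapsto rv_1$ so that $\mathtt p$ takes the symmetric form of \eqref{eq:s4-matrix} with a common $u$. This requires $a/a'=t\,\sigma_1(t)$ for $t=s/r$.

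\emph{This normalization is the step I expect to be the main obstacle.} The set $\{t\sigma_1(t)\}$ is not all of $\Bbbk^\times$, so I would first check whether the relations of $\mathfrak s$ constrain $a/a'$. If they do not, the fallback is to compare exponent functions along $\sigma_1$-orbits, as in Proposition~\ref{prop4}, and possibly reinterpret the model.

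Once $M_\Bbbk\cong M_{u,\lambda}$, the simple module $M$ is a graded submodule of $M_{u,\lambda}$. By the argument in (a) it contains $L_{u,\lambda}$, so simplicity of $M$ gives $M=L_{u,\lambda}$.

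\textbf{Claim (c).} For (i), suppose $\varphi\colon L_{u,\lambda}\to L_{v,\lambda'}$ is an even isomorphism.
\begin{enumerate}[$($1$)$]
\item Comparing the eigenvalue of $\Sigma$ on the even part, $\lambda+\frac12=\lambda'+\frac12$, forces $\lambda=\lambda'$.
\item $\varphi$ extends to the localizations and is diagonal there, given by multiplication by $r_0$ on the even part and $r_1$ on the odd part, with $r_0,r_1\in\Bbbk^\times$.
\item Intertwining $\mathtt p$ gives $\sigma_1(r_0)u=r_1v$ and $\sigma_1(r_1)u=r_0v$.
\item Putting $t=r_1/r_0$, these equations give $t\,\sigma_1(t)=1$. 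A valuation and degree argument on exponent functions then forces $t$ to be a constant with $t^2=1$, so $t=\pm1$.
\item Consequently $v=\frac{\sigma_1(r_0)}{r_1}u=\pm\frac{\sigma_1(r_0)}{r_0}u\in\Gtil\cdot u$.
\end{enumerate}
Conversely, given $v\in\Gtil\cdot u$, the diagonal map $(r_0,\pm r_0)$ intertwines $\mathtt p$, $\mathtt q$ and $\mathtt h$. It therefore maps $M_{u,\lambda}\to M_{v,\lambda}$, and by uniqueness of the simple graded submodule it carries $L_{u,\lambda}$ onto $L_{v,\lambda}$.

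For (ii), apply $\Pi$, which swaps the two copies of $N_u$. This turns the $\Sigma$-eigenvalue $\lambda+\frac12$ into $-(\lambda+\frac12)=(-\lambda-1)+\frac12$, and it interchanges the two $\mathtt q$-entries $\frac{\mathtt h-\lambda}{2}$ and $\frac{\mathtt h+\lambda+1}{2}$. This is exactly the substitution $\lambda\mapsto-\lambda-1$. Hence $\Pi(M_{u,\lambda})\cong M_{u,-\lambda-1}$, and (ii) reduces to (i).
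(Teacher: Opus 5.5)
Your arguments for (a) and (c) follow the paper's proof. In (c)(i) you are more careful than the paper, since you actually justify $r_1=\pm r_0$ via $t\sigma_1(t)=1$. One small slip: intertwining $\mathtt p$ gives $r_1u=\sigma_1(r_0)v$, with $u,v$ swapped relative to your step (3), which does not affect the conclusion.

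\textbf{The gap in (b).} The genuine gap is in (b), exactly at the normalization you flagged, and it cannot be closed. Write $\mathtt p(bv_0)=\sigma_1(b)a\,v_1$, $\mathtt p(bv_1)=\sigma_1(b)a'v_0$, $\mathtt q(bv_0)=\sigma_1^{-1}(b)c\,v_1$ and $\mathtt q(bv_1)=\sigma_1^{-1}(b)c'v_0$. Then the relation $\mathtt p\mathtt q+\mathtt q\mathtt p=\mathtt h$ on both components, together with $\Sigma|_{\bar0}=\lambda+\frac12$, is equivalent to
\[
\sigma_1(c)\,a'=\tfrac{\mathtt h+\lambda}{2},\qquad \sigma_1^{-1}(a)\,c'=\tfrac{\mathtt h-\lambda}{2}.
\]
The odd-component equations then hold automatically. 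So every pair $a,a'\in\Bbbk^\times$ gives a module, and the relations impose nothing on $a/a'$.

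On the other hand, an even isomorphism of localizations has the form $\mathrm{diag}(s,r)$ with $s,r\in\Bbbk^\times$. It replaces $(a,a')$ by $(\sigma_1(s)a/r,\ \sigma_1(r)a'/s)$. Hence the class of $a/a'$ in $\Bbbk^\times/P$, where $P:=\{t\sigma_1(t)\,:\,t\in\Bbbk^\times\}$, is an isomorphism invariant; $\Pi$ merely inverts it.

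The exponent function of $t\sigma_1(t)$ is $z\mapsto\mathtt m_t(z)+\mathtt m_t(z-1)$. So every element of $P$ has vanishing alternating sum $\sum_k(-1)^k\mathtt m(z+k)$ on each $\mathbb Z$-orbit. Now take $a=1$, $a'=\mathtt h$, with $c=\frac{\mathtt h+1+\lambda}{2(\mathtt h+1)}$ and $c'=\frac{\mathtt h-\lambda}{2}$. This module has invariant $[\mathtt h^{-1}]\neq[1]$, whereas every $M_{u,\mu}$ has $a/a'=1$.

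\textbf{Consequence: (b) is false as stated.} Take the graded socle of this localized module. It exists by precisely your argument for (a): $\mathtt p$ is invertible, so every nonzero graded submodule meets the even part and contains the simple $\mathfrak{sl}_2$-socle there. This socle is simple and $H$-torsion free of superrank $(1|1)$, and its localization is the module above. Therefore it is isomorphic to no $L_{u,\mu}$ and to no $\Pi(L_{u,\mu})$, so Claim (b) fails.

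The paper's own proof has the same hole, hidden in the phrase ``after an appropriate rescaling of this basis''. What survives is a classification with this extra invariant. Normalize only $v_{\bar1}:=\mathtt p v_{\bar0}$, so that $\mathtt p$ has blocks $x_1$ and $x_w$, where $w$ is the parameter of $\mathtt e$ on the even part, well defined modulo $G_{\sigma_1^2}$. The modules $L_{u,\lambda}$ are exactly the cases $w\in P$, with $w\equiv u\sigma_1(u)$.

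Your suggested fallback, comparing exponent functions along orbits, is the right tool. However, it produces this obstruction rather than the normalization you were hoping for.
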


\begin{proof}
By construction, $L_{u,\lambda}$ is a 
$\mathbb{Z}_2$-graded $U(\mathfrak s)$-submodule of $M_{u,\lambda}$.
Since the action of $x$ on $N_u$ is invertible, the operator
\[
\mathtt p=\begin{pmatrix}0&x\\ x&0\end{pmatrix}
\]
is invertible on $M_{u,\lambda}$. Hence, any nonzero graded $U(\mathfrak s)$-submodule $W\subset M_{u,\lambda}$ satisfies
$W_{\bar0}\neq 0$. This implies that 
$W_{\bar0}$ is a nonzero $U(\mathfrak s_{\bar0})$-submodule 
of $(M_{u,\lambda})_{\bar0}$ with central character
$(\lambda+1)^2$ and therefore contains the simple socle $L^{\bar0}_{u,\lambda}$.
It follows that we have 
$L_{u,\lambda}=U(\mathfrak s)\,L^{\bar0}_{u,\lambda}\subset W$, so $L_{u,\lambda}$ is the unique minimal nonzero graded submodule of
$M_{u,\lambda}$, hence simple. The $H$-torsion freeness and superrank $(1|1)$ are inherited from $M_{u,\lambda}$, and the fact that $\Sigma^2$ acts as in Formula~\eqref{eq:s4-lambda}
holds by Lemma~\ref{lem:s4-matrix}. This proves Claim~\eqref{thm:s4-graded.1}.
$\mathfrak{osp}(1|2)$
Conversely, let $M=M_{\bar0}\oplus M_{\bar1}$ be a simple graded $U(\mathfrak s)$-module which is $H$-torsion free of superrank $(1|1)$.
Localizing yields $M_{\Bbbk}:=\Bbbk\otimes_H M\simeq \Bbbk^{1|1}$.
Since $\Sigma^2$ is central, it acts on $M$ by a scalar.
Choose $\lambda \in \mathbb C$ such that $\Sigma^2$ acts as in 
Formula~\eqref{eq:s4-lambda}.
Choose $0\neq v_{\bar0}\in (M_{\Bbbk})_{\bar0}$ and set $v_{\bar1}:=\mathtt p\,v_{\bar0}$.
Then $v_{\bar1}\neq 0$ and $(v_{\bar0},v_{\bar1})$ is a homogeneous $\Bbbk$-basis of $M_{\Bbbk}$.
After an appropriate rescaling of this basis, 
$\mathtt p$ is represented by $\begin{pmatrix}0&x\\ x&0\end{pmatrix}$, for some invertible additive map $x:\Bbbk\to \Bbbk$ satisfying
\[
x(r\,b)=\sigma_1(r)\,x(b),\qquad \text{for all }r,b\in \Bbbk.
\] 
Then $x$ is uniquely determined by $u:=x(1)\in \Bbbk^{\times}$ and satisfies $x\cdot b=\sigma_1(b)\,u$, for all $b\in \Bbbk$.
Consequently $M_{\Bbbk}\simeq N_u\oplus N_u$ as an $R_1$-module. Using the relations $\mathtt p\mathtt q+\mathtt q\mathtt p=\mathtt h$ together with \eqref{eq:s4-lambda} (equivalently, the action of $\Sigma$),
one checks that $\mathtt q$ is forced to have the form in \eqref{eq:s4-matrix}. Thus $M_{\Bbbk}\simeq (M_{u,\lambda})_{\Bbbk}$ as graded $U(\mathfrak s)$-modules.
Now $M_{\bar0}$ is a torsion-free $U(\mathfrak s_{\bar0})$-module of rank $1$ with central character $(\lambda+1)^2$, hence embeds into
$(M_{u,\lambda})_{\bar0}$ and contains its socle $L^{\bar0}_{u,\lambda}$. Therefore $M$ contains
$U(\mathfrak s)L^{\bar0}_{u,\lambda}=L_{u,\lambda}$, and, by simplicity, $M\cong L_{u,\lambda}$. This proves Claim~\eqref{thm:s4-graded.2}.

Let us now prove Claim~\eqref{thm:s4-graded.3-1}.
Lemma~\ref{lem:s4-matrix} implies that,  on $L_{u,\lambda}$, we have
\[
\Sigma|_{(L_{u,\lambda})_{\bar0}}=(\lambda+\tfrac12)\,\mathrm{Id},\qquad
\Sigma|_{(L_{u,\lambda})_{\bar1}}=-(\lambda+\tfrac12)\,\mathrm{Id}.
\]
Hence any \emph{even} isomorphism $L_{u,\lambda}\to L_{v,\lambda'}$ forces $\lambda=\lambda'$.
Fix $\lambda=\lambda'$. If $v\in \Gtil\cdot u$, write $v=\varepsilon\,\frac{r}{\sigma_1(r)}u$ with $r\in \Bbbk^{\times}$ and $\varepsilon\in\{\pm 1\}$.
For $\varepsilon=1$, multiplication by $r$ gives an isomorphism $N_u\to N_v$ of $R_1$-modules, hence an isomorphism
$M_{u,\lambda}\to M_{v,\lambda}$ of graded $U(\mathfrak s)$-modules,
and it carries $L_{u,\lambda}$ onto $L_{v,\lambda}$.
For $\varepsilon=-1$, the same map gives an isomorphism $M_{u,\lambda}\to M_{w,\lambda}$ with $w=\frac{r}{\sigma_1(r)}u$, and, composing with the
even automorphism of $M_{w,\lambda}=N_w\oplus N_w$ given by $(a,b)\mapsto(a,-b)$, yields an isomorphism $M_{w,\lambda}\to M_{-w,\lambda}=M_{v,\lambda}$,
again sending $L_{u,\lambda}$ onto $L_{v,\lambda}$.

Conversely, assume $L_{u,\lambda}\cong L_{v,\lambda}$ as $\mathbb{Z}_2$-graded modules. 
Localizing, we obtain an isomorphism
$(L_{u,\lambda})_{\Bbbk}\cong (L_{v,\lambda})_{\Bbbk}$ of $\mathbb{Z}_2$-graded $U(\mathfrak s)$-modules.
Using the normal form \eqref{eq:s4-matrix}, both localized modules are isomorphic to $\Bbbk^{1|1}$ with
$\mathtt h=\mathtt h I_2$ and $\mathtt p=\begin{pmatrix}0&x_u\\ x_u&0\end{pmatrix}$ (resp.\ $\mathtt p=\begin{pmatrix}0&x_v\\ x_v&0\end{pmatrix}$),
where $x_u\cdot b=\sigma_1(b)u$ and $x_v\cdot b=\sigma_1(b)v$.
Let $\Phi$ be an even isomorphism between these localized modules. Since $\Phi$ commutes with $\mathtt h$, it is diagonal,
$\Phi=\mathrm{diag}(r_0,r_1)$, for some $r_0,r_1\in \Bbbk^{\times}$.
Commuting with $\mathtt p$ implies $r_1=\pm r_0$.
If $r_1=r_0$, then the relation $\Phi x_u = x_v \Phi$ yields
 $v \in G_{\sigma_1}\cdot u$, while, if $r_1=- r_0$, 
 we obtain  $v \in  (-G_{\sigma_1})\cdot u$. Thus $v\in \Gtil\cdot u$.

Finally, we prove Claim~\eqref{thm:s4-graded.3-2}.
Note that applying $\Pi$ swaps the two homogeneous components. Thus the eigenvalue of $\Sigma$ on the even component changes from
$\lambda+\tfrac12$ to $-(\lambda+\tfrac12)$, which corresponds to replacing $\lambda$ by $-\lambda-1$.
Concretely, conjugating the matrices in \eqref{eq:s4-matrix} by the permutation matrix
$P=\begin{pmatrix}0&1\\ 1&0\end{pmatrix}$ interchanges the two diagonal blocks and transforms the parameter $\lambda$ into $-\lambda-1$.
The condition on $u$ is unchanged, and the argument above shows that the parameter is determined up to $\Gtil$.
This proves Claim~\eqref{thm:s4-graded.3-2} and, 
in particular, that $\Pi(L_{u,\lambda})\cong L_{u,-\lambda-1}$.
\end{proof}

\begin{remark}\label{rem:s4-spanning}Writing $u^{(2)}:=u\,\sigma_1(u)$ and $\sigma=\sigma_1^2$ (so $\sigma(\mathtt h)=\mathtt h-2$), the restriction of $L_{u,\lambda}$
to the even subalgebra $\mathfrak s_{\bar0}\cong\mathfrak{sl}_2$ can be described as follows.
The action of $\mathtt e=\mathtt p^2$ on both homogeneous components is given by $x^2$, hence the $\sigma$-parameter is $u^{(2)}$
on both $(L_{u,\lambda})_{\bar0}$ and $(L_{u,\lambda})_{\bar1}$, while Lemma~\ref{lem:s4-matrix} gives the Casimir scalars
$(\lambda+1)^2$ and $\lambda^2$, respectively. Therefore, in the notation of Theorem~\ref{thm-main}, one has
\[
(L_{u,\lambda})_{\bar0}\cong K_{u^{(2)}},\qquad (L_{u,\lambda})_{\bar1}\cong K_{u^{(2)}},
\]
where $K_{u^{(2)}}$ is taken with central character $\vartheta=(\lambda+1)^2$ in the even part and $\vartheta=\lambda^2$ in the odd part.
Equivalently, we may write
\[
\mathrm{Res}^{U(\mathfrak s)}_{U(\mathfrak{sl}_2)}\,L_{u,\lambda}\ \cong\ K_{u^{(2)},(\lambda+1)^2}\ \oplus\ K_{u^{(2)},\lambda^2}.
\]
\end{remark}

We note that, for classification of simple $\mathfrak{osp}(1|2)$-modules, one could also 
use some of the ideas from \cite{CM21}. The approach from \cite{CM21}, however, is not applicable at all
for classification of ungraded modules, moreover, it is not applicable in its full strength
even in the graded case due to the abscence of the so-called {\em grading operator} in the terminology of \cite{CM21}. For example, Claim~\eqref{thm:s4-graded.3-2} of Theorem~\ref{thm:s4-graded}
is not derivable using the approach of \cite{CM21}.

\vspace{2mm}

\noindent
(D.~Grantcharov) Department of Mathematics, University of Texas at Arlington, USA\\
Email address: {\tt grandim\symbol{64}uta.edu}

\noindent
(L.~K{\v r}i{\v z}ka) Charles University, Prague, CZECH REPUBLIC\\
Email address: {\tt krizka.libor\symbol{64}gmail.com}

\noindent
(V.~Mazorchuk) Department of Mathematics, Uppsala University, Uppsala, SWEDEN \\
Email address: {\tt mazor\symbol{64}math.uu.se}

\end{document}